\colorlet{Changes@Color}{red}
\newtheorem{thm}{Theorem}[section]
\newtheorem{lem}[thm]{Lemma}
\newtheorem{cor}[thm]{Corollary}
\theoremstyle{remark}
\newtheorem{rem}[thm]{Remark}
\theoremstyle{definition}
\renewcommand{\phi}{\varphi} 
\newcommand{\sign}{\mathrm{sign}} 
\newcommand{\E}{\mathbb{E}} 
\renewcommand{\>}{\rangle}
\DeclareRobustCommand\bfseries{%
	\not@math@alphabet\bfseries\mathbf
	\fontseries\bfdefault\selectfont
	\boldmath 
}
\begin{document}
	
	\date{Version of \today}
	\title{A sharp upper bound for the expected occupation density of It\^o processes with bounded irregular drift and diffusion coefficients}

		\begin{abstract}
 We find explicit and optimal upper bounds for the expected occupation density for an It\^o-process when its drift and diffusion coefficients are unknown under boundedness and ellipticity conditions on the coefficients. This is related to the optimal bound for the expected interval occupation found in \cite{ankirchner2021sharp}. In contrast, our bound is for a single point and the resulting formula is less involved. Our findings allow us to find explicit upper bounds for mean path integrals.
   \end{abstract}
\author[Kr\"uhner]{Paul Kr\"uhner}
\address[Paul Kr\"uhner]{Institute for Statistics and Mathematics, WU Wien}
\email[]{paul.eisenberg@wu.ac.at}
\urladdr{https://bach.wu.ac.at/d/research/ma/18422/}
\author[Xu]{Shijie Xu}
\address[Shijie Xu]{Institute for Financial and Actuarial Mathematics, University of Liverpool}
\email[]{ShijieXu@liverpool.ac.uk}



\subjclass[2020]{60G44} 
\keywords{Expected occupation density, parameter uncertainty}

\maketitle
\pagestyle{plain}
\section{Introduction} 


     Let $X: \mathbb R_+\to\mathbb R$ be a measurable function. The occupation measure of $X$ up to time  $T\geq 0$ is $\mu_T(\Gamma)=m\{0\leq s\leq T :X_s\in \Gamma\}$, $m$ being Lebesgue measure and $\Gamma$ a Borel set on $\mathbb R$. It is the amount of time spent by $X$ in the set $\Gamma$ during $[0, T]$. $X$ has an occupation density on $[0,T]$ if $\mu_T$ is absolutely continuous with respect to the Lebesgue measure. Roughly speaking, $\mu_T(\Gamma)$ could be expressed as the sum of times spent by $X$ at each $y\in \Gamma $ during $[0,T]$ in the following sense
     $$\mu_T(\Gamma)=\int_{y\in \Gamma} \alpha_T(y)dy$$
     where $\alpha_T: \mathbb R\to \mathbb R$ is the occupation density. D.\ Geman and J.\ Horowitz \cite{geman1980occupation} provided a thorough survey for occupation densities for deterministic paths as well as occupation densities for Markov processes. P.\ Imkeller and D.\ Nualart \cite{imkeller1994integration} established a sufficient criterion for the existence of occupation density for stochastic processes, i.e.\ criteria that ensure that almost every path has occupation density. The occupation times formula \cite[Corollary VI 1.6]{revuz2013continuous} implies a connection between local times and occupation densities.

     In this paper we find explicit and sharp upper bounds for the expectation of the occupation density for elliptic It\^o-processes with bounded coefficients. To be precise, we like to find upper bounds for 
      $$ \rho_T(y):= \limsup_{N \rightarrow \infty}   \E\left[\frac{N}2\int_0^T 1_{\left\{|X_s-y|\leq \frac 1 N\right\}}ds\right] $$
     when $dX_t = \beta_t dt + \sigma_t dW_t$ is an $\mathbb R$-valued It\^o-process with bounded coefficients $\beta$, $\sigma$ with $\sigma \geq  a$ for some constant $a>0$. 
     
     We denote the set of pairs of $\mathbb R$-valued progressively measurable processes $(\beta, \sigma)$ with $\sigma_t\in [a,b]$ and $|\beta_t|\leq k\sigma^2_t$ for any $t\geq 0$ by $\mathcal A:=\mathcal A_{a,b,k}$ where $0<a\leq b$ and $0\leq k$. Note that if $k=0$ then the drift coefficient $\beta$ is zero. The case $a=b$ corresponds to the cases in which only constant diffusion coefficients $\sigma_t=a$ are considered.
      
      Let $\mathcal C_{\mathcal A}$ be the class of stochastic processes $X$ such that there is $(\beta, \sigma)\in \mathcal A$ with $$dX_t= \beta_tdt+\sigma_tdW_t,\quad t\in [0,\infty)$$ where $W$ is a standard Brownian motion and the starting point $X_0$ is deterministic. We are interested in the maximal expected occupation density at a given time $T\geq 0$ over all possible It\^o-processes $X\in \mathcal C_{\mathcal A}$:
     $$G(x,y,T):=\sup_{X\in \mathcal C_{\mathcal A}, X_0=x} \E[\alpha_T(y)]$$
     where $x=X_0$ is the starting point of $X$, $y\in \mathbb R$ is the level and $\alpha_T(y)$ is the occupation density of $X$ in level $y$ up to time $T$ if it exists. 
     However, under this condition, the existence of an occupation density is unknown to the best of our knowledge. Especially, a connection to the densities of $(X_t)_{t\geq 0}$ cannot be exploited because $X_t$ does not need to have a density, see E.\ Fabes and C.\ Kenig \cite[Theorem]{fabes1981examples}. Instead of using the direct definition above, we use the following "approximation" version. Let $G: \mathbb R \times \mathbb R \times \mathbb R_+ \to \mathbb R$ be defined by
     \begin{align}\label{eq:OG}
     G(x,y,T):=\sup_{X \in  \mathcal C_{\mathcal A}, X_0=x}\left(\limsup_{N \rightarrow \infty}  \E\left[\frac{N}2 \int_0^T 1_{\left\{|X_s-y|\leq \frac 1 N\right\}}ds\right]\right).
     \end{align}
     
In this paper, we derive an explicit formula for $G$. $G$ is the sharpest upper bound for processes in $\mathcal C_\mathcal A$ which means that given $x, y\in \mathbb R$ and $T\geq 0$, for any $\epsilon >0$, we can construct an It\^o-process starting at $x$ with specific drift and diffusion coefficients in $\mathcal A$ such that its expected occupation density with respect to the level $y$ at time $T$ is higher than $G(x,y,T)-\epsilon$, see the proof of Theorem \ref{t:main result 2} and Equation \eqref{X:conti}. 

Our main result is as follows:
\begin{thm}\label{thm:main}
   The function $G$ as defined in Equation \eqref{eq:OG} satisfies
$$G(x,y,T)= \int_{0}^{T} \left(\frac{b}{a^2\sqrt{t}}\phi(v(r,t))+\frac{b^2 k}{a^2} \Phi(v(r,t))\right)dt$$   
where $r:=|x-y|$ and  $v(r,t):= kb\sqrt{t}-\frac{r}{b\sqrt{t}}, t\geq 0, \phi(z):=\frac{1}{\sqrt{2\pi}}e^{-\frac{z^2}{2}}$ and $\Phi(z):=\frac{1}{\sqrt{2\pi}}\int_{-\infty}^{z}e^{-\frac{s^2}{2}}ds$ for any $x,y\in \mathbb R$ and $T\geq0$.

\end{thm}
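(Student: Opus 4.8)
The proof has two halves: an upper bound $G(x,y,T)\le \ldots$ valid for every $X\in\mathcal C_{\mathcal A}$, and a matching lower bound obtained by constructing near-optimal processes. For the upper bound I would work with the occupation-time functional $\E\bigl[\tfrac N2\int_0^T \mathbbm 1_{\{|X_s-y|\le 1/N\}}\,ds\bigr]=\tfrac N2\int_0^T \P(|X_s-y|\le 1/N)\,ds$. The key is to control $\P(|X_s-y|\le \varepsilon)$ for small $\varepsilon$, or rather its "density-like" limit, uniformly over $(\beta,\sigma)\in\mathcal A$. I expect this to come from a comparison/coupling argument: among all admissible drift-diffusion pairs, the probability that $X_s$ lies in a small neighbourhood of $y$ is maximised by pushing $X$ towards $y$ as fast as possible, i.e.\ by taking the diffusion coefficient at its extreme value $b$ (to diffuse there quickly) and the drift pointing toward $y$ with maximal magnitude $k\sigma^2 = kb^2$. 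This heuristic already explains the shape of the answer: with $\sigma\equiv b$ and drift $-kb^2\,\mathrm{sign}(X_s-y)$, the process $|X_s-y|$ behaves like a reflected/absorbed Brownian motion with drift, and $v(r,t)=kb\sqrt t - r/(b\sqrt t)$ is exactly the rescaled argument $(\mu t - r)/(b\sqrt t)$ with $\mu=kb^2$. The prefactor $b/(a^2\sqrt t)$ versus $b^2k/a^2$ reflects that the occupation density at $y$ picks up a factor $1/\sigma^2(y)$ — heuristically from the occupation-times formula $\alpha_T(y)=\int_0^T \sigma_s^{-2}\,d\langle X\rangle$-type identities — so once $X$ is \emph{at} $y$ one wants $\sigma$ \emph{small}, namely $\sigma=a$, to linger; hence the $1/a^2$.

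**Rigorous route for the upper bound.** Rather than a direct coupling I would pass through a PDE/semigroup estimate or through the result of \cite{ankirchner2021sharp} on expected interval occupation. Concretely: fix $N$, let $I_N=[y-1/N,y+1/N]$, and bound $\E[m\{s\le T: X_s\in I_N\}]$ using the sharp interval-occupation bound already established; then divide by $|I_N|=2/N$ and take $\limsup_{N\to\infty}$. The interval bound from \cite{ankirchner2021sharp} should be an integral that, upon dividing by $2/N$ and letting $N\to\infty$, converges to the claimed integrand; the single-point formula being "less involved" is consistent with it being a derivative/limit of the interval formula. The steps: (i) recall the interval-occupation bound and its explicit form; (ii) show the limit $N\to\infty$ of (that bound)$\cdot N/2$ exists and equals $\int_0^T\bigl(\tfrac{b}{a^2\sqrt t}\phi(v(r,t))+\tfrac{b^2k}{a^2}\Phi(v(r,t))\bigr)dt$; (iii) conclude $G(x,y,T)\le$ this integral. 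Dominated convergence needs a uniform-in-$N$ integrable bound on $\tfrac N2\P(|X_s-y|\le 1/N)$ near $s=0$, which is delicate when $x=y$ since the integrand $\sim b/(a^2\sqrt t)$ is only just integrable; I would get this from the Gaussian-type upper heat-kernel estimate for uniformly elliptic bounded-drift operators (Aronson-type), which holds uniformly over $\mathcal A$.

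**Lower bound / sharpness.** Here I would exhibit, for each $\varepsilon>0$, an explicit $X^\varepsilon\in\mathcal C_{\mathcal A}$ with expected occupation density at $y$ exceeding $G(x,y,T)-\varepsilon$. The natural candidate, guided by the heuristic above, is: drift $\beta_t=-kb^2\,\mathrm{sign}(X_t-y)$ and diffusion $\sigma_t=b$ while $|X_t-y|$ is not too small, switching to $\sigma_t=a$ (and drift $0$, still admissible) in a shrinking neighbourhood $|X_t-y|\le\delta$. One then computes (or bounds below) the expected occupation density of this process and lets $\delta\to0$; the $\sigma=b$ regime produces the $\phi(v)$ and $\Phi(v)$ structure via the explicit law of Brownian motion with constant drift, and the switch to $\sigma=a$ near $y$ supplies the $1/a^2$ prefactor. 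I would need to check this construction is covered by (or can be approximated within) $\mathcal C_{\mathcal A}$ — the sign-drift is discontinuous, so one may have to mollify it and invoke a stability/continuity argument, which the excerpt hints at via "see \ldots Equation \eqref{X:conti}."

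**Main obstacle.** The crux is the upper bound's \emph{optimality}, i.e.\ proving that no admissible $(\beta,\sigma)$ beats the candidate — equivalently, identifying the correct extremal strategy and proving a comparison theorem for the expected occupation density as a functional of $(\beta,\sigma)$. A naive coupling fails because the \emph{same} path cannot simultaneously have $\sigma$ large (to reach $y$) and $\sigma$ small (to accumulate density at $y$); the resolution is the time-localised switching described above, and making the "$N\to\infty$ then optimise" and "optimise then $N\to\infty$" orders agree is where the real work lies. I expect the cleanest path is to inherit optimality directly from the already-proved sharp interval bound of \cite{ankirchner2021sharp} and simply perform the limit carefully, so that the only genuinely new analytic content is the $N\to\infty$ asymptotics of that interval formula and the dominated-convergence justification near the singular time $t=0$.
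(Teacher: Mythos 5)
Your heuristic correctly identifies the extremal behaviour (drift $-kb^2\,\mathrm{sign}(X-y)$ and $\sigma=b$ away from $y$, $\sigma=a$ at $y$), and your lower-bound construction is essentially the paper's near-optimal feedback control \eqref{X:conti}. But as it stands the proposal has two genuine gaps. First, the upper bound is outsourced to the interval-occupation bound of \cite{ankirchner2021sharp} plus an unperformed asymptotic computation; the paper never derives the point bound this way (it only calls the results ``related''), and nothing in your plan verifies that the admissible class there matches $\mathcal A_{a,b,k}$ or that $\tfrac N2\times(\text{interval bound})$ converges to the claimed integrand. Worse, the technical tool you invoke to justify dominated convergence --- a Gaussian/Aronson-type density bound for $X_s$ ``uniformly over $\mathcal A$'' --- is false in this class: the coefficients are merely progressively measurable (non-divergence form, path-dependent), and by the Fabes--Kenig example cited in the paper the marginals $X_s$ need not have densities at all, so no pointwise bound on $\tfrac N2\,\mathbb P(|X_s-y|\le 1/N)$ of the kind you describe is available. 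The paper avoids this entirely by never touching marginal densities: it works with the local time $L^0_T$ via It\^o--Tanaka and the occupation times formula, proving $\tfrac1{a^2}\E[L^y_T]\le H_T(|x-y|)$ through a verification argument against an explicit HJB solution (obtained by solving the exponentially stopped problem, Theorem \ref{t:V=Q}, and inverting the Laplace transform, Lemma \ref{H:explicit}).

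Second, for the lower bound you say one ``computes (or bounds below) the expected occupation density of this process and lets $\delta\to0$,'' but that is exactly where the work lies and no method is given: the switched process has no tractable closed-form occupation density. The paper's mechanism is to apply the time-inhomogeneous It\^o--Tanaka formula (Lemma \ref{Ito-Tanaka}) to $H_{T-t}(|X_t|)$ along the candidate-optimal process, so that the drift/diffusion terms vanish identically outside the mollification band $\{0<|X_t|<2/M\}$, and then to control the error from inside the band by a self-improvement step: the already-proved upper bound (Lemma \ref{lem:leq}, via Lemma \ref{epsilon bounds}) shows the expected time spent in that band is $O(1/M)$, giving an error $O(M^{-1/3})$ that is sent to zero. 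Without an analogue of this verification-plus-self-improvement argument (or a genuinely explicit computation for the switched SDE, which you do not supply), the sharpness half of your proof does not close.
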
  

The purpose of the main result is to obtain disintegration bounds for expected time integrals. Also, see Corollary \ref{c:disintegration2} for an improved version.
\begin{cor}\label{c:disintegration}
  Let $dX_t = \beta_t dt + \sigma_t dW_t$ be a $1$-dimensional It\^o-process with $x:=X_0$ deterministic, $|\beta_t|\leq k \sigma_t^2$ and $\sigma_t\in[a,b]$ for any $t\geq 0$. Then the expected occupation density 
 $ \lambda^X_T(y) := \underset{N \rightarrow \infty}\limsup \  \E\left[\frac{N}2\int_0^T 1_{\left\{|X_s-y|\leq \frac 1 N\right\}}ds \right]$
of $X$ is bounded (in $y$) and fulfils
 $ \lambda^X_T(y) \leq G(x,y,T) $
 for any $T\geq 0$, $y\in\mathbb R$. Moreover, for measurable $f:\mathbb R\rightarrow[0,\infty]$ one has
  $$ \E\left[ \int_0^T f(X_s) ds\right] \leq \int_{-\infty}^\infty G(x,y,T)f(y) dy $$
  for any $T\geq 0$.
\end{cor}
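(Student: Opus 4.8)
The plan is to separate the pointwise bound from the integrated bound, and to obtain the latter by reducing to indicators of open sets via a mollification identity together with one application of Fatou's lemma and one of the reverse Fatou lemma. For the pointwise statement, observe first that since $X$ is an It\^o process with progressively measurable coefficients satisfying $\sigma_t\in[a,b]$ and $|\beta_t|\le k\sigma_t^2$ and with $X_0=x$ deterministic, we have $X\in\mathcal C_{\mathcal A}$ with $X_0=x$; hence $\lambda^X_T(y)\le G(x,y,T)$ is immediate from the defining Equation \eqref{eq:OG}. Boundedness then follows from the closed form in Theorem \ref{thm:main}: using $\phi\le\tfrac1{\sqrt{2\pi}}$, $0\le\Phi\le1$ and $\int_0^T t^{-1/2}\,dt=2\sqrt T$,
$$ \lambda^X_T(y)\ \le\ G(x,y,T)\ \le\ \frac{2b\sqrt T}{a^2\sqrt{2\pi}}+\frac{b^2kT}{a^2}\ =:\ C_T, $$
uniformly in $y\in\mathbb R$.

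For the integral bound, set $\psi_N:=\tfrac N2\,1_{\{|\cdot|\le 1/N\}}$ (a probability density) and $h_N(y):=\E\bigl[\tfrac N2\int_0^T 1_{\{|X_s-y|\le 1/N\}}\,ds\bigr]$, so that $\lambda^X_T(y)=\limsup_N h_N(y)$. Tonelli's theorem (all integrands nonnegative) gives, for any Borel set $\Gamma$,
$$ \int_\Gamma h_N(y)\,dy\ =\ \E\Bigl[\int_0^T (1_\Gamma*\psi_N)(X_s)\,ds\Bigr]. $$
Let $U$ be open and bounded. Since $U$ is open, $(1_U*\psi_N)(u)=1=1_U(u)$ for $u\in U$ once $2/N<\operatorname{dist}(u,U^c)$, and $(1_U*\psi_N)(u)\ge 0=1_U(u)$ for $u\notin U$; hence $\liminf_N (1_U*\psi_N)\ge 1_U$ pointwise. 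Applying Fatou's lemma on the finite measure $ds\otimes d\mathbb P$ yields $\E[\int_0^T 1_U(X_s)\,ds]\le\liminf_N\int_U h_N(y)\,dy$. For the converse I invoke a uniform‑in‑$N$ bound $h_N(y)\le \tilde C(a,b,k,T)$, which is exactly what the finite‑$N$ estimates in the proof of Theorem \ref{thm:main} provide (alternatively this is Krylov's inequality applied to the unit‑mass kernel $\psi_N(\cdot-y)$); since $m(U)<\infty$, $h_N 1_U$ is dominated by an $L^1$ function, so the reverse Fatou lemma and the pointwise bound give $\limsup_N\int_U h_N\le\int_U\limsup_N h_N=\int_U\lambda^X_T\le\int_U G$. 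Combining, $\E[\int_0^T 1_U(X_s)\,ds]\le\int_U G(x,y,T)\,dy$ for every bounded open $U$, hence for every open $U$ by monotone convergence along $U\cap(-n,n)$.

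It remains to pass from open sets to arbitrary $f$. Both $\Gamma\mapsto\E[\int_0^T 1_\Gamma(X_s)\,ds]$ and $\Gamma\mapsto\int_\Gamma G(x,y,T)\,dy$ are finite Borel measures on each bounded interval and are outer regular, so the inequality on open sets extends to all Borel $\Gamma$, and then to all of $\mathbb R$ by monotone convergence in $n$. Finally, for measurable $f:\mathbb R\to[0,\infty]$ the layer‑cake identity $f=\int_0^\infty 1_{\{f>\lambda\}}\,d\lambda$ and Tonelli give
$$ \E\Bigl[\int_0^T f(X_s)\,ds\Bigr]=\int_0^\infty\E\Bigl[\int_0^T 1_{\{f>\lambda\}}(X_s)\,ds\Bigr]d\lambda\ \le\ \int_0^\infty\int_{\{f>\lambda\}} G(x,y,T)\,dy\,d\lambda\ =\ \int_{\mathbb R} G(x,y,T)f(y)\,dy . $$

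The only genuinely delicate point is the domination needed for the reverse Fatou step: the family $(h_N)_N$ need not converge, and $h_N(y)$ is \emph{not} controlled by its own $\limsup$ $\lambda^X_T(y)$, so one really does need the uniform local integrability $\sup_N h_N\in L^1_{\mathrm{loc}}$ — available from the finite‑$N$ estimates underlying Theorem \ref{thm:main} (or, self‑containedly, from Krylov's estimate for non‑degenerate one‑dimensional It\^o processes with bounded drift). Everything else — the mollification identity, the two Fatou arguments, outer regularity, and the layer‑cake reduction — is routine.
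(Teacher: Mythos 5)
Your argument is correct, and it is worth noting that the paper itself states Corollary \ref{c:disintegration} without printing a proof, so the comparison is with the route its machinery suggests rather than with an explicit argument. The pointwise bound and the uniform bound $G(x,y,T)\leq \frac{2b\sqrt T}{a^2\sqrt{2\pi}}+\frac{b^2kT}{a^2}$ are exactly as intended. For the integral bound you go through mollified densities $h_N$, Fatou for open sets, reverse Fatou under a uniform-in-$N$ domination, outer regularity, and the layer-cake formula; this works, and you correctly identify that the whole scheme hinges on $\sup_N h_N\leq \tilde C(a,b,k,T)$, which is indeed available: the chain in the proofs of Lemmas \ref{lem:Q geq V} and \ref{lem:leq} gives
\begin{equation*}
\E\left[\tfrac N2\int_0^T 1_{\{|X_s-y|\leq \frac1N\}}\,ds\right]\;\leq\;\tfrac N2\int_{y-\frac1N}^{y+\frac1N}\tfrac1{a^2}\,\E[L^z_T]\,dz\;\leq\;\tfrac N2\int_{y-\frac1N}^{y+\frac1N}H_T(|x-z|)\,dz\;\leq\;H_T(0).
\end{equation*}
The trade-off is that the very estimate you must invoke, $\frac1{a^2}\E[L^y_T]\leq H_T(|x-y|)=G(x,y,T)$ for every $y$ (established in the proof of Lemma \ref{lem:leq}), already yields the corollary in one step without any limiting procedure: by the occupation times formula \cite[Corollary VI 1.6]{revuz2013continuous} and Tonelli,
\begin{equation*}
\E\left[\int_0^T f(X_s)\,ds\right]\;\leq\;\E\left[\int_0^T f(X_s)\frac{\sigma_s^2}{a^2}\,ds\right]\;=\;\int_{-\infty}^{\infty} f(y)\,\frac{\E[L^y_T]}{a^2}\,dy\;\leq\;\int_{-\infty}^{\infty} f(y)\,G(x,y,T)\,dy,
\end{equation*}
which is presumably the intended derivation and avoids the Fatou/reverse-Fatou, outer-regularity and layer-cake steps entirely. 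Your route buys a formulation purely in terms of the quantities $h_N$ (and would also work if one replaced the paper's local-time estimate by an external Krylov-type bound), but given that the domination it requires is equivalent to the local-time bound, the direct argument is both shorter and strictly less demanding; if you keep your version, state explicitly where the uniform bound on $h_N$ comes from (the displayed chain above), since "the finite-$N$ estimates in the proof of Theorem \ref{thm:main}" is otherwise too vague a citation for the load-bearing step.
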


In the special case $a=b$, where the diffusion coefficient $\sigma_t=a$ is constant, D.\ Ba\~nos and P.\ Kr\"uhner \cite{banos2016} and P.\ Kr\"uhner and S.\ Xu \cite{kruhner2023explicit} provided an upper bound $\beta(x,y,t)$ for the density of the marginals $X_t$. In this case, one has
 $$ G(x,y,T) \leq \int_0^T \beta(x,y,s) ds.$$
By inspecting the formulas one can see that this happens to be an equality. Obviously, there are numerous methods to find upper bounds for the density of the marginals but as it is shown in E.\ Fabes and C.\ Kenig \cite[Theorem]{fabes1981examples}, there are processes $X\in\mathcal C_{\mathcal A}$ which do not even possess density. Moreover, there is a more explicit example of a process $X\in\mathcal C_{\mathcal A}$ in J.\ McNamara \cite[Theorem 3]{mcnamara1985regularity} with unbounded density.

In order to prove Theorem \ref{thm:main} we first study its exponentially stopped variant and find its value in Theorem \ref{t:V=Q}. 

Apart from interval occupation bounds studied in S.\ Ankirchner and J.\ Wendt \cite{ankirchner2021sharp}, S.\ Ankirchner, C.\ Blanchet-Scalliet and M.\ Jeanblanc \cite{ankirchner2017controlling} studied exponential Brownian processes and their expected occupation. P.\ Kr\"uhner and J.\ Eisenberg \cite[Appendix A]{eisenbergmeasuring} found an occupation bound for stopped processes where the diffusion coefficient is fixed and the drift has asymmetric bounds where one bound is possibly infinite. They focused on an application of their bound.

This paper is arranged as follows. In Section \ref{s:Laplace}, we study $G$ through its (modified) Laplace transform $V_\lambda(x,y) = \int_0^\infty \lambda e^{-\lambda T}G(x,y,T) dT$. In order to identify $V_\lambda$ we start out by understanding $V_\lambda$ as the value function of some control problem \eqref{eq:LTP}. In section \ref{s:H} we use a verification argument paired with a self-improvement argument to show that the inverse Laplace transform of $V_\lambda$ is, indeed, $G$. In the appendix we have summarised a technical bound needed for the proofs and give an application to Theorem \ref{thm:main} which is summarised in Corollary \ref{c:disintegration2}.
     
\subsection{Notations}
We define the signum function $\mathrm{sign}: \mathbb R\to \{-1,+1\}$ by 
\begin{align*}
	\mathrm{sign}(x):=
	\begin{cases}
	1 \quad&\text{if} \quad x>0, \\
        -1 \quad&\text{if} \quad x\leq 0.
	\end{cases}
\end{align*}  
We denote the absolute value of $x\in\mathbb R$ by $|x|$. For an $\mathbb R$-valued stochastic process $X$ we denote its local time in the sense of \cite[Theorem VI.1.2]{revuz2013continuous} by $L_t^y(X), t\geq 0, y\in \mathbb R$ (if it exists) or simply by $L_t^y$ if it is clear which process is meant. We denote the Dirac measure concentrated on $y\in\mathbb R$ by $\delta_y$.
$\mathcal L[\cdot](\lambda)$ denotes the Laplace transform at position $\lambda >0$, i.e.\ for an integrable functions $f:\mathbb R_+\rightarrow\mathbb R$ we define
$$\mathcal L[f(\cdot)](\lambda):=\int_0^{\infty} f(t)e^{-\lambda t}dt, \quad \lambda >0. $$
For a real function $f$ which has right and left limits at $x\in\mathbb R$ we define the difference operator of $f$ at $x$ via
$$\Delta [f(\cdot)](x):= \lim_{\epsilon\downarrow 0}\left(f(x+\epsilon)-f(x-\epsilon) \right).$$ 
Note that $f$ does not need to be defined in $x$ for $\Delta [f(\cdot)](x)$ to be well defined. For multivariable functions $f:\mathbb R^n\rightarrow \mathbb R$ we denote the $k$-th
order (mixed) partial derivatives of $f$ with respect to $x_{j_1},\dots, x_{j_k}$ by $\partial_{x_{j_k}\dots x_{j_1}} f$, where $j_i\in \{1,\dots,n\}, i=1,\dots,k,$ and $k\in \mathbb N$. Further notations are used as in D.\ Revuz and M.\ Yor \cite{revuz2013continuous}.

\section{Exponentially stopped time control problem}\label{s:Laplace}
In order to solve the fixed-time optimisation problem \eqref{eq:OG}, we first solve the exponentially stopped-time optimisation problem. More precisely, we replace the fixed time $T\geq 0$ with an independent exponentially distributed time with rate $\lambda >0$. 
We proceed with this due to the following considerations: On the one hand side, exponentially stopped time is memoryless which simplifies the problem. On the other hand side, transforming into exponentially stopped time can be interpreted as a (modified) Laplace transform. The second consideration will be used in the next section via the inverse Laplace transform, which yields a candidate for the fixed-time problem.

We define $V_{\lambda}: \mathbb R \times \mathbb R \to \mathbb R$ for any $\lambda >0$ by 
\begin{align}\label{eq:LTP}
V_{\lambda}(x,y):=\sup_{(\beta,\sigma) \in \mathcal A, X_0=x} \limsup_{N \rightarrow  \infty} \frac{N}2 \E\left[\int_0^\infty \lambda e^{-\lambda t} 1_{\left\{\left|X^{\beta, \sigma}_t-y\right|\leq \frac 1 N\right\}}dt\right].
\end{align} 
This is a control problem and we believe that the corresponding HJB-equation is:
\begin{align}
-\lambda V_{\lambda}(x,y) +\sup_{(\beta,\sigma) \in U}\left\{ \partial_x V_{\lambda}(x,y)\beta + \frac 1 2 \partial_{xx} V_{\lambda}(x,y)\sigma^2\right\} &=0, \quad x\neq y\label{eq:HJMneq}
\\ \sup_{(\beta,\sigma) \in U}\left\{1_{\{y=x\}} + \frac 1 2 \Delta [\partial_x V_{\lambda}(\cdot,y)](x)\sigma^2\right\} &=0\label{eq:HJMneq1}
\end{align}
and 
\begin{align}\label{e:U}U:=\{(\beta,\sigma)\in\mathbb R^2:\sigma\in[a,b],|\beta|\leq k\sigma^2\}.\end{align} In fact, we first solve these equations in Lemma \ref{S:general} and show in Theorem \ref{t:V=Q} that the solution is indeed equal to $V_\lambda$.



Equation \eqref{eq:HJMneq} is a second-order ODE and Equation \eqref{eq:HJMneq1} is a boundary condition. This ODE with boundary condition has a unique bounded solution which we provide in the next lemma.
\begin{lem}\label{S:general}
Let $\lambda >0$ and $y\in \mathbb R$. We define the function $Q_{\lambda}(\cdot, y): \mathbb R\to \mathbb R$ by
$$Q_{\lambda}(x,y):=\frac{1}{\left(-k+\sqrt{k^2+\frac{2\lambda}{b^2} }\right)a^2}e^{\left(k-\sqrt{k^2+\frac{2\lambda}{b^2}}\right)|x-y|}
.$$ 
Then $Q_\lambda$ is bounded, $x\mapsto Q_\lambda(x,y)$ is absolutely continuous and a $C^2$-function outside $\{y\}$. It has a left-side $x$-derivative $\partial_x Q_\lambda(\cdot,y)$ everywhere which is given by
$$\partial_x Q_{\lambda}(x,y):=\frac{\sign(y-x)}{a^2}e^{\left(k-\sqrt{k^2+\frac{2\lambda}{b^2}}\right)|x-y|},\quad \lambda>0, x\in \mathbb R.$$ 
The left-side derivative $\partial_x Q_\lambda(\cdot,y)$ is of bounded variation with a single discontinuity in $\{y\}$ with $\Delta [\partial_x Q_\lambda(\cdot,y)](x) = -\frac{2}{a^2}1_{\{x=y\}}$. $Q_\lambda$ is a solution to Equation \eqref{eq:HJMneq}, \eqref{eq:HJMneq1}, i.e.\
\begin{align*}
-\lambda Q_{\lambda}(x,y) +\sup_{(\beta,\sigma) \in U}\left\{\partial_x Q_{\lambda}(x,y)\beta + \frac 1 2 \partial_{xx} Q_{\lambda}(x,y)\sigma^2\right\} &=0, \quad x\neq y
\\ \sup_{(\beta,\sigma) \in U}\left\{1_{\{y=x\}} + \frac 1 2 \Delta [\partial_x Q_{\lambda}(\cdot,y)](x)\sigma^2\right\} &=0
\end{align*}
for any $x\in\mathbb R$.
\end{lem}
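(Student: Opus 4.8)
The plan is a direct verification, organised around the shorthand $\gamma:=\gamma(\lambda):=\sqrt{k^2+\tfrac{2\lambda}{b^2}}-k$. Since $\sqrt{k^2+2\lambda/b^2}>\sqrt{k^2}=k$ for $\lambda>0$, we have $\gamma>0$, and $k-\sqrt{k^2+2\lambda/b^2}=-\gamma$, so $Q_\lambda(x,y)=\frac{1}{\gamma a^2}e^{-\gamma|x-y|}$. Squaring $\gamma+k=\sqrt{k^2+2\lambda/b^2}$ gives the one algebraic identity I will use throughout: $\gamma^2+2k\gamma=\tfrac{2\lambda}{b^2}$, equivalently $kb^2+\tfrac{\gamma b^2}{2}=\tfrac{\lambda}{\gamma}$.

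For the regularity claims: boundedness is immediate from $0<e^{-\gamma|x-y|}\le 1$, so $0<Q_\lambda(x,y)\le\frac{1}{\gamma a^2}$. On each open half-line $\{x<y\}$ and $\{x>y\}$ the map $x\mapsto|x-y|$ is affine, so $x\mapsto Q_\lambda(x,y)$ is $C^\infty$ there, and a one-line computation gives $\partial_x Q_\lambda(x,y)=\frac{\sign(y-x)}{a^2}e^{-\gamma|x-y|}$ and $\partial_{xx}Q_\lambda(x,y)=\frac{\gamma}{a^2}e^{-\gamma|x-y|}$ for $x\neq y$. The one-sided limits of $\partial_x Q_\lambda(\cdot,y)$ at $y$ are $+\tfrac1{a^2}$ from the left and $-\tfrac1{a^2}$ from the right, so this (left-continuous) derivative is globally bounded by $\tfrac1{a^2}$; hence $Q_\lambda(\cdot,y)$ is Lipschitz, therefore absolutely continuous, and $\partial_x Q_\lambda(\cdot,y)$ is of bounded variation with its only discontinuity at $y$, where $\Delta[\partial_x Q_\lambda(\cdot,y)](y)=-\tfrac1{a^2}-\tfrac1{a^2}=-\tfrac2{a^2}$, while $\Delta[\partial_x Q_\lambda(\cdot,y)](x)=0$ for $x\neq y$ by continuity of the derivative there. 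This gives the stated $\Delta[\partial_x Q_\lambda(\cdot,y)](x)=-\tfrac2{a^2}1_{\{x=y\}}$.

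For the HJB identity \eqref{eq:HJMneq}, fix $x\neq y$ and write $E:=\frac1{a^2}e^{-\gamma|x-y|}>0$. Substituting the derivatives above, the left-hand side equals $E\bigl(-\tfrac{\lambda}{\gamma}+\sup_{(\beta,\sigma)\in U}\{-\sign(x-y)\beta+\tfrac{\gamma}{2}\sigma^2\}\bigr)$, using $\sign(y-x)=-\sign(x-y)$. The bracketed supremum over $U$ is bang-bang: since $\gamma>0$ it is maximised at $\sigma=b$, and since $|\beta|\le k\sigma^2$ the term $-\sign(x-y)\beta$ is maximised at $\beta=-kb^2\sign(x-y)$, yielding the value $kb^2+\tfrac{\gamma b^2}{2}$. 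By the algebraic identity above this is exactly $\tfrac{\lambda}{\gamma}$, so the whole expression vanishes. For the boundary identity \eqref{eq:HJMneq1}: if $x\neq y$ both $1_{\{x=y\}}$ and $\Delta[\partial_x Q_\lambda(\cdot,y)](x)$ vanish, so the supremum is $0$; if $x=y$ the expression inside the supremum is $1-\tfrac{\sigma^2}{a^2}$, whose supremum over $\sigma\in[a,b]$ is attained at $\sigma=a$ and equals $0$.

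There is no genuine obstacle: this is a verification lemma and each step is elementary once the substitutions are made. The points requiring care are purely bookkeeping — the signs in the one-sided derivatives at $y$ (hence the sign and size of the jump), and the correct identification of the bang-bang maximiser in the supremum: the optimal diffusion is the upper bound $b$ away from $y$ in \eqref{eq:HJMneq} but the lower bound $a$ at $y$ in \eqref{eq:HJMneq1}, and the optimal drift always points from $x$ towards $y$. It is precisely the value of the constant $\frac{1}{\gamma a^2}$, dictated by the relation $kb^2+\tfrac{\gamma b^2}{2}=\tfrac{\lambda}{\gamma}$, that makes the $\sigma=a$ choice in \eqref{eq:HJMneq1} cancel the jump $-\tfrac2{a^2}$.
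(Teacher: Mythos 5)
Your proposal is correct and follows essentially the same route as the paper, which simply verifies the regularity claims and plugs $Q_\lambda$ into Equations \eqref{eq:HJMneq} and \eqref{eq:HJMneq1}; your write-up just makes explicit the derivative computations, the jump $-\tfrac{2}{a^2}$ at $y$, the bang-bang maximisers, and the algebraic identity $kb^2+\tfrac{\gamma b^2}{2}=\tfrac{\lambda}{\gamma}$ that the paper leaves implicit.
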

\begin{proof}
The given function $Q_\lambda$ is obviously bounded, it is absolutely continuous (in $x$) and its left-side derivative $\partial_x Q_\lambda$ is given as above. Also, $Q_\lambda(\cdot,y)$ is $C^2$ on $\mathbb R\setminus\{y\}$ and $\Delta [\partial_x Q_\lambda(\cdot,y)](x) = -\frac{2}{a^2}1_{\{x=y\}}$ for any $x\in\mathbb R$. Plugging it into the left side of Equation \eqref{eq:HJMneq}, \eqref{eq:HJMneq1} yields the claim.
\end{proof}
 
The next theorem states that $Q_\lambda = V_\lambda$ where $Q_\lambda$ is from Lemma \ref{S:general} and $V_\lambda$ is from Equation \eqref{eq:LTP}. As an intermediate step we first show that $V_\lambda \leq Q_\lambda$.
\begin{lem}\label{lem:Q geq V} 
	Let $\lambda >0$, $y\in\mathbb R$ and $Q_{\lambda}: \mathbb R\times \mathbb R \to \mathbb R$ be defined as in Lemma \ref{S:general}. Then we have $$V_\lambda(x,y)\leq Q_\lambda (x,y)$$ for any $x\in \mathbb R$.
\end{lem}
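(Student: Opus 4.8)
The plan is a verification argument built on the function $Q_\lambda$ of Lemma~\ref{S:general}. Fix $(\beta,\sigma)\in\mathcal A$, write $X:=X^{\beta,\sigma}$ with $X_0=x$, and let $L_t^{z}(X)$, $t\geq0$, $z\in\mathbb R$, denote the local times of the continuous semimartingale $X$. It suffices to bound, uniformly in $(\beta,\sigma)$, the quantity inside the supremum in \eqref{eq:LTP} by $Q_\lambda(x,y)$. By Lemma~\ref{S:general} the map $x\mapsto Q_\lambda(x,y)$ is absolutely continuous with a left-side derivative of bounded variation, hence a difference of two convex functions, and its distributional second derivative is the signed measure $\mu(dz)=\partial_{xx}Q_\lambda(z,y)1_{\{z\neq y\}}\,dz-\frac{2}{a^2}\delta_y(dz)$. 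First I would apply the It\^o--Tanaka formula for such functions to $Q_\lambda(X_t,y)$ and rewrite the absolutely continuous part of the local-time integral via the occupation times formula \cite{revuz2013continuous}, obtaining
\begin{align*}
Q_\lambda(X_t,y)=Q_\lambda(x,y)+\int_0^t\partial_xQ_\lambda(X_s,y)\,dX_s+\frac12\int_0^t\partial_{xx}Q_\lambda(X_s,y)1_{\{X_s\neq y\}}\sigma_s^2\,ds-\frac1{a^2}L_t^y(X).
\end{align*}

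Next I would discount. Splitting $dX_s=\beta_s\,ds+\sigma_s\,dW_s$ and integrating by parts, the process
\begin{align*}
Z_t:=e^{-\lambda t}Q_\lambda(X_t,y)+\frac1{a^2}\int_0^te^{-\lambda s}\,dL_s^y(X)
\end{align*}
has finite-variation part $\int_0^te^{-\lambda s}\big(-\lambda Q_\lambda(X_s,y)+\partial_xQ_\lambda(X_s,y)\beta_s+\frac12\partial_{xx}Q_\lambda(X_s,y)\sigma_s^2\big)1_{\{X_s\neq y\}}\,ds$, because the $-\frac1{a^2}e^{-\lambda s}\,dL_s^y$ contribution coming from the It\^o--Tanaka formula is cancelled exactly by the term added in the definition of $Z$. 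The time set $\{s:X_s=y\}$ is $ds$-null (it is $d\langle X\rangle$-null by the occupation times formula, and $\sigma_s^2\geq a^2>0$), and $(\beta_s,\sigma_s)\in U$ for a.e.\ $(s,\omega)$, so the HJB equation \eqref{eq:HJMneq}, satisfied by $Q_\lambda$ according to Lemma~\ref{S:general}, forces this integrand to be nonpositive. Hence $Z_t\leq Q_\lambda(x,y)+\int_0^te^{-\lambda s}\partial_xQ_\lambda(X_s,y)\sigma_s\,dW_s$; since $|\partial_xQ_\lambda|\leq a^{-2}$ and $\sigma\leq b$, the stochastic integral is an $L^2$-bounded martingale, so taking expectations and using $Q_\lambda\geq0$ gives $\frac1{a^2}\E\big[\int_0^te^{-\lambda s}\,dL_s^y(X)\big]\leq Q_\lambda(x,y)$, and monotone convergence as $t\to\infty$ yields $\E\big[\int_0^\infty e^{-\lambda s}\,dL_s^z(X)\big]\leq a^2Q_\lambda(x,z)$ for every level $z\in\mathbb R$ (the derivation is identical with $z$ in place of $y$).

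Finally I would transfer this local-time estimate to the occupation-density approximation. Since $d\langle X\rangle_s=\sigma_s^2\,ds\geq a^2\,ds$ one has $dt\leq a^{-2}\,d\langle X\rangle_t$ pathwise, so the occupation times formula applied to the deterministic weight $e^{-\lambda t}1_{\{|z-y|\leq 1/N\}}$ gives, pathwise,
\begin{align*}
\int_0^\infty e^{-\lambda t}1_{\{|X_t-y|\leq\frac1N\}}\,dt\leq\frac1{a^2}\int_{y-1/N}^{y+1/N}\Big(\int_0^\infty e^{-\lambda t}\,dL_t^z(X)\Big)\,dz.
\end{align*}
Taking expectations, applying Tonelli and the bound obtained above,
\begin{align*}
\frac N2\,\E\Big[\int_0^\infty e^{-\lambda t}1_{\{|X_t-y|\leq\frac1N\}}\,dt\Big]\leq\frac N2\int_{y-1/N}^{y+1/N}Q_\lambda(x,z)\,dz,
\end{align*}
and the right-hand side is independent of $(\beta,\sigma)$ and converges to $Q_\lambda(x,y)$ as $N\to\infty$ by continuity of $z\mapsto Q_\lambda(x,z)$. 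Passing to $\limsup_N$ and then to the supremum over $(\beta,\sigma)\in\mathcal A$ yields $V_\lambda(x,y)\leq Q_\lambda(x,y)$.

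I expect the main obstacle to be the bookkeeping around the kink of $Q_\lambda(\cdot,y)$ at $y$: identifying the atom $-\frac2{a^2}\delta_y$ of its distributional second derivative with the correct sign, and checking that the resulting drift correction in the It\^o--Tanaka formula is the \emph{constant} $-\frac1{a^2}\,dL_t^y$, independent of $\sigma$, so that adding $\frac1{a^2}\int e^{-\lambda s}\,dL_s^y$ in the definition of $Z$ cancels it for \emph{every} admissible control; this is precisely where the normalisation of $Q_\lambda$ by $a^2$, rather than by a $\sigma$-dependent quantity, enters. The remaining points — negligibility of $\{X_s=y\}$ in time, the true-martingale (rather than local-martingale) property of the stochastic integral, and the weighted version of the occupation times formula — are routine consequences of the ellipticity and boundedness built into $\mathcal A$.
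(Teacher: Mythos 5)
Your proposal is correct and follows essentially the same route as the paper: a verification argument via the It\^o--Tanaka formula for $Q_\lambda(\cdot,y)$ and the occupation times formula, the HJB inequality \eqref{eq:HJMneq} to kill the drift, the resulting local-time bound $\frac{1}{a^2}\E\bigl[\int_0^\infty e^{-\lambda s}\,dL_s^z\bigr]\leq Q_\lambda(x,z)$ for every level $z$, the transfer back to the indicator functional using $\sigma_s^2\geq a^2$ and the occupation times formula, and finally continuity of $z\mapsto Q_\lambda(x,z)$ to pass $N\to\infty$. The only (cosmetic) difference is that you discount explicitly through the process $Z_t$ with the factor $e^{-\lambda t}$ rather than evaluating at an independent $\mathrm{Exp}(\lambda)$ time $T$ and cancelling $\E[Q_\lambda(X_T)]$ as the paper does; the two are equivalent since $\E[L_T^z]=\E\bigl[\int_0^\infty e^{-\lambda s}\,dL_s^z\bigr]$.
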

\begin{proof}
	
	Without loss of generality we may assume that $y=0$ and for simplicity, we use the notation $Q_\lambda(x):=Q_\lambda(x,0)$ for any $x\in\mathbb R$.
 
	Let $X \in \mathcal C_{\mathcal A}$ and denote its drift coefficient by $\beta$ and its diffusion coefficient by $\sigma$, $X_0=x$. Lemma \ref{S:general} shows the requirement of It\^o-Tanaka's formula \cite[Theorem VI 1.5]{revuz2013continuous} which yields 
	\begin{align}
	Q_{\lambda}(X_t)&=Q_{\lambda}(X_0)+\int_{0}^{t}\partial_x Q_{\lambda}(X_s)dX_s+\frac{1}{2}\int_{\mathbb R}L_t^z \partial_{xx} Q_{\lambda}(dz)\nonumber\\
	&=Q_{\lambda}(X_0)+\int_{0}^{t}\partial_x Q_{\lambda}(X_s)\beta_sds+\int_{0}^{t}\partial_x Q_{\lambda}(X_s)\sigma_sdW_s\nonumber\\&\quad+\frac{1}{2}\left(\int_{0}^{t}\partial_{xx}Q_{\lambda}(X_s)\sigma^2_s 1_{\{X_s\neq 0\}}ds+\Delta [\partial_x Q_{\lambda}(\cdot)](0) L^0_t \right)\label{eq:Q:opt}\\
	&=Q_{\lambda}(X_0)+\int_{0}^{t}\left( \partial_x Q_{\lambda}(X_s)\beta_s+\frac{1}{2} \partial_{xx} Q_{\lambda}(X_s)\sigma^2_s 1_{\{X_s\neq 0\}}\right)ds-\frac{1}{a^2}L^0_t \nonumber\\&\quad+\int_{0}^{t}\partial_x Q_{\lambda}(X_s)\sigma_sdW_s\label{eq:Q:ITOTANAKA}
	\end{align}
where the first equation \eqref{eq:Q:opt} is due to the occupation times formula \cite[Corollary VI 1.6]{revuz2013continuous} and the last equation \eqref{eq:Q:ITOTANAKA} is due to Lemma \ref{S:general}.

Let $T$ be an exponentially distributed random time with intensity $\lambda>0$ which is independent of $X$. We get
\begin{align*}
	\E [Q_{\lambda}(X_T)] &=\int_{0}^{\infty} \E[Q_{\lambda}(X_t)] \lambda e^{-\lambda t}dt\\
	&=Q_{\lambda}(X_0)\nonumber\\
 &\quad+\int_{0}^{\infty} \int_{s}^{\infty} \lambda e^{-\lambda t}dt \E\left[\partial_x Q_{\lambda}(X_s)\beta_s+\frac{1}{2} \partial_{xx} Q_{\lambda}(X_s)\sigma^2_s 1_{\{X_s\neq 0\}}\right]ds\\&\quad-\frac{1}{a^2}\int_{0}^{\infty}\lambda e^{-\lambda s}\E [L^0_s]ds\\
	&=Q_{\lambda}(X_0)+\int_{0}^{\infty} e^{-\lambda s} \E\left[\partial_x Q_{\lambda}(X_s)\beta_s+\frac{1}{2} \partial_{xx} Q_{\lambda}(X_s)\sigma^2_s 1_{\{X_s\neq 0\}}\right] ds\\&\quad-\frac{1}{a^2}\int_{0}^{\infty}\lambda e^{-\lambda s}\E [L^0_s]ds.
\end{align*}
Using that $Q_\lambda$ is a solution to Equation \eqref{eq:HJMneq} yields
\begin{align*}
    \E [Q_{\lambda}(X_T)] &\leq Q_{\lambda}(X_0)+\int_{0}^{\infty} e^{-\lambda s} \E[\lambda Q_{\lambda}(X_s)]ds-\frac{1}{a^2}\int_{0}^{\infty}\lambda e^{-\lambda s}\E [L^0_s]ds\\
	&=Q_{\lambda}(X_0)+\E[Q_{\lambda}(X_T)]-\frac{1}{a^2}\E[L^0_T].
\end{align*}
We find
\begin{align*}
   \frac{1}{a^2} \E [L_T^0] \leq  Q_{\lambda}(X_0). 
\end{align*}

Repeating the arguments for arbitrary $y\in\mathbb R$ yields
\begin{align}\label{ineq:Q：L}
   \frac{1}{a^2} \E [L_T^y] \leq  Q_{\lambda}(X_0,y). 
\end{align}

We define $f_N(x):=\frac{N}2  1_{\left\{|x|\leq \frac{1}{N}\right\}}, x\in\mathbb R$ and apply the occupation times formula \cite[Corollary VI 1.6]{revuz2013continuous} to get
$$\frac{N}2\int_{0}^{T}1_{\left\{|X_s|\leq \frac{1}{N}\right\}}a^2ds \leq \frac{N}2\int_{0}^{T}1_{\left\{|X_s|\leq \frac{1}{N}\right\}}\sigma^2_sds=\frac{N}2\int_{-\frac 1 N}^{\frac 1 N}L^x_Tdx.$$
Hence, by applying expectation and using Equation \eqref{ineq:Q：L}, we get
$$\E\left[\frac{N}2\int_{0}^{T}1_{\left\{|X_s-y|\leq \frac{1}{N}\right\}}ds\right]\leq \frac{N}{2}\int_{y-\frac 1 N}^{y+\frac 1 N}\frac1{a^2}\E[L^z_T]dz \leq \frac{N}2 \int_{y-\frac1N}^{y+\frac1N} Q_\lambda(X_0,z)dz.$$
Since $z\mapsto Q_\lambda(X_0,z)$ is continuous we find due to the fundamental theorem of calculus that
$$\limsup_{N\rightarrow \infty} \E \left[\frac{N}2 \int_{0}^{T}1_{\left\{|X_s-y|\leq \frac{1}{N}\right\}} ds\right]\leq Q_\lambda(X_0,y) = Q_\lambda(x,y).$$
By the definition of $V_\lambda$, cf.\ Equation \eqref{eq:LTP} we find
 \begin{align*}   
 V_{\lambda}(x,y)&=\sup_{X \in \mathcal C_{\mathcal A}, X_0=x}\limsup_{N\rightarrow \infty} \E \left[\frac{N}2\int_{0}^{\infty} \lambda e^{-\lambda s}1_{\left\{|X_s-y|\leq \frac{1}{N}\right\}}ds\right]
 \\&\leq Q_{\lambda}(x,y)
 \end{align*}
 as claimed.
 \end{proof}

We now complete the proof that $Q_{\lambda}$ solves the exponentially stopped control problem \eqref{eq:LTP} by a self-improvement argument.
 
\begin{thm}\label{t:V=Q}
   The solution $Q_\lambda$ provided in Lemma \ref{S:general} and the function $V_\lambda$ from Equation \eqref{eq:LTP} are equal:
    $$V_\lambda = Q_\lambda $$
for any $\lambda > 0$.
\end{thm}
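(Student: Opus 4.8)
The plan is to establish the reverse inequality $V_\lambda(x,y) \geq Q_\lambda(x,y)$, since Lemma \ref{lem:Q geq V} already gives $V_\lambda \leq Q_\lambda$. The strategy is a \emph{self-improvement} (or fixed-point) argument: we construct a near-optimal process by running the diffusion with the coefficients that maximise the Hamiltonian in \eqref{eq:HJMneq}, \eqref{eq:HJMneq1}, namely $\sigma \equiv b$ and $\beta = k b^2 \operatorname{sign}(y - X)$ (pushing toward the level $y$ at maximal rate). For such a process we can compute $\E[L_T^y]$ and the expected occupation density explicitly, and the claim is that this reproduces $Q_\lambda$. Concretely, I would take $X$ to solve $dX_t = k b^2 \operatorname{sign}(y - X_t)\,dt + b\,dW_t$ with $X_0 = x$; this is a well-posed SDE (a Brownian motion with bang-bang drift toward $y$, which can be handled by a Girsanov change of measure or by the theory of one-dimensional SDEs with bounded measurable drift). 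Since $(\beta,\sigma) \in \mathcal A$, this $X$ lies in $\mathcal C_{\mathcal A}$ and is admissible for the supremum defining $V_\lambda$.

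The key computation is then to show that for this specific process, $\frac1{a^2}\E[L_T^y] = Q_\lambda(x,y)$, where $T$ is independent exponential with rate $\lambda$. I would do this by running the It\^o--Tanaka argument of Lemma \ref{lem:Q geq V} \emph{in reverse}: with the bang-bang choice, the inequality $\partial_x Q_\lambda(X_s)\beta_s + \tfrac12 \partial_{xx} Q_\lambda(X_s)\sigma_s^2 \leq \lambda Q_\lambda(X_s)$ becomes an equality for a.e.\ $s$ (since these coefficients attain the supremum in \eqref{eq:HJMneq} at every point $x \neq y$), so the chain of estimates in that proof collapses to $\frac1{a^2}\E[L_T^y] = Q_\lambda(x,y)$ exactly. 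Then, by the occupation times formula applied with $\sigma \equiv b$ rather than the lower bound $a$, one gets
\begin{align*}
\E\left[\frac N2 \int_0^T 1_{\{|X_s - y|\leq 1/N\}}\,ds\right] = \frac{1}{b^2}\,\frac N2 \int_{y-1/N}^{y+1/N}\E[L_T^z]\,dz,
\end{align*}
which by continuity of $z \mapsto \E[L_T^z]$ and the fundamental theorem of calculus tends to $\frac1{b^2}\E[L_T^y]$ as $N \to \infty$. This gives a lower bound of $\frac{1}{b^2}\E[L_T^y]$ for $V_\lambda$, but we need $Q_\lambda = \frac1{a^2}\E[L_T^y]$, so this crude approach loses a factor $a^2/b^2$ when $a < b$.

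This gap is where the self-improvement is needed, and I expect it to be the main obstacle. The resolution: near the level $y$, the optimal strategy should switch the diffusion coefficient to its \emph{lower} value $a$ (so that time spent in a neighbourhood of $y$ counts more heavily toward the occupation density, while the local-time bound is insensitive to this since $L_T^y$ only depends on behaviour at $y$). So I would instead use a family of processes $X^\delta$ with $\sigma_t = a$ when $|X_t - y| \leq \delta$ and $\sigma_t = b$, $\beta_t = kb^2\operatorname{sign}(y - X_t)$ otherwise (or, more carefully, the genuinely optimal feedback control that the HJB analysis suggests). As $\delta \downarrow 0$ the local time at $y$ and hence $\frac1{a^2}\E[L_T^y]$ is unchanged in the limit (by continuity/stability of local time under the coefficient perturbation on a shrinking set), while now the occupation-time formula gives $\E[\frac N2 \int_0^T 1_{\{|X_s-y|\le 1/N\}}ds] \approx \frac1{a^2}\frac N2\int_{y-1/N}^{y+1/N}\E[L_T^z]dz$ for $N$ large relative to $1/\delta$, yielding $V_\lambda(x,y) \geq \frac1{a^2}\E[L_T^y] - \varepsilon = Q_\lambda(x,y) - \varepsilon$. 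Letting $\varepsilon \downarrow 0$ finishes the proof. The delicate points I would need to nail down are: (i) well-posedness and the Girsanov/Tanaka computation for the bang-bang SDE; (ii) that the equality case in the HJB genuinely holds a.e.\ along the optimal path (in particular that $X$ spends zero Lebesgue time at $y$, so the $1_{\{X_s \neq y\}}$ factor is harmless); and (iii) the stability of $\E[L_T^y]$ as the diffusion is modified only on $\{|x - y| \leq \delta\}$ with $\delta \to 0$, which I would argue via the explicit local-time formula or via the occupation times formula and dominated convergence.
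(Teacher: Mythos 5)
Your construction is essentially the paper's: a bang-bang drift of maximal size pushing toward the level, $\sigma=b$ away from the level, and $\sigma=a$ on a small neighbourhood of it, so that the occupation-time conversion near $y$ produces the factor $\frac1{a^2}$ rather than $\frac1{b^2}$; your preliminary computation that for the pure bang-bang process ($\sigma\equiv b$) the chain in Lemma \ref{lem:Q geq V} collapses to the equality $\frac1{a^2}\E[L_T^y]=Q_\lambda(x,y)$ is also correct (the time spent at $y$ is Lebesgue-null by ellipticity, so the indicator $1_{\{X_s\neq y\}}$ is harmless). The genuine gap is your step (iii): you transfer this identity to the perturbed processes $X^\delta$ by asserting ``stability of $\E[L_T^y]$ under modifying the diffusion on $\{|x-y|\leq\delta\}$ as $\delta\downarrow 0$''. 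As stated this is unproved and is exactly the crux: $X^\delta$ and the bang-bang process are different weak solutions, no coupling is given, and neither the occupation times formula nor dominated convergence compares local times of two different processes. Without a quantitative estimate you only know $\frac1{a^2}\E[L_T^y(X^\delta)]\leq Q_\lambda(x,y)$ (from Lemma \ref{lem:Q geq V}), not that the deficit vanishes with $\delta$.

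The paper closes precisely this hole without ever comparing two processes: it runs the It\^o--Tanaka verification identity of Lemma \ref{lem:Q geq V} directly on the perturbed process (with a mollified feedback $\sigma_M$, which also gives a continuous-coefficient SDE \eqref{X:conti} and avoids your well-posedness issue (i) for discontinuous $\sigma$). Outside the $\tfrac2M$-neighbourhood the HJB equality \eqref{eq:HJMneq} holds exactly for the chosen control; on the transition region the defect is bounded by a constant times the expected exponentially-killed occupation of $(-\tfrac2M,\tfrac2M)$, which by the already-proved upper bound of Lemma \ref{lem:Q geq V} is at most $\int_{-2/M}^{2/M}Q_\lambda(z)\,dz\leq 4Q_\lambda(0)/M$ --- this is the actual ``self-improvement''. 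That yields $Q_\lambda(x)\leq\frac1{a^2}\E[L_T^0]+C/M$ for the perturbed process itself, after which the conversion to the occupation functional proceeds as you describe, because $\sigma=a$ within $1/M$ of the level; the paper uses the local-time approximation \cite[Corollary VI.1.9]{revuz2013continuous} here, which also spares you the continuity of $z\mapsto\E[L_T^z]$ that your fundamental-theorem-of-calculus step would additionally require. Two minor further points: inside the small neighbourhood admissibility forces $|\beta|\leq k a^2$ there (you left the drift unspecified, and $kb^2$ would be inadmissible), and your Girsanov discussion is unnecessary once the feedback coefficients are continuous. If you replace your stability claim by this direct verification-plus-error-estimate on $X^\delta$, your argument becomes the paper's proof.
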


\begin{proof}
Lemma \ref{lem:Q geq V} yields $V_\lambda(x,y)\leq Q_\lambda (x,y)$ for any $x,y\in\mathbb R$ and $\lambda>0$.

For given $\lambda>0$ and $x,y\in\mathbb R$ we need to show that
$$Q_\lambda(x,y)\leq V_\lambda (x,y).$$

Without loss of generality, we may assume that $y=0$. For simplicity, we use the notation $Q_\lambda(x):=Q_\lambda(x,0)$ for any $x\in\mathbb R$. 
Fix $M\in\mathbb N_+$ and we construct a feedback control via choosing $$\sigma_M(x):= a+(b-a)g_M(|x|)$$ where $g_M\in C\left(\mathbb R_+,[0,1]\right)$ such that for any $x\in\mathbb R_+$
\begin{equation*}
g_M(x)=
\begin{cases}
0 & \text{if} \quad x\in [0, \frac1 M],\\
1 & \text{if} \quad x\in [\frac2 M,\infty].\\
\end{cases}  
\end{equation*}
 Let $X$ be the solution to the stochastic differential equation:
 $$ dX_t = -k \sigma_M(X_t)^2 \mathrm{sign}(X_t) dt + \sigma_M(X_t)dW_t, \quad X_0=x, t\geq 0$$
 and $T$ be an exponentially distributed random time with parameter $\lambda$ which is independent of $X$.
The control is now specified as:
 $$ \beta_t := -k \sigma_M(X_t)^2\mathrm{sign}(X_t), \quad \sigma_t := \sigma_M(X_t),\quad t\geq 0.$$
We have $(\beta,\sigma)\in\mathcal A$ and $X\in\mathcal C_{\mathcal A}$. We find that
\begin{align}
\E [Q_{\lambda}\left(X_T\right)] 
&=Q_{\lambda}(X_0)+\int_{0}^{\infty} e^{-\lambda s} \E\bigg[\left(\partial_x Q_{\lambda}\left(X_s\right)\beta_s+\frac{1}{2} \partial_{xx}Q_{\lambda}(X_s)\sigma_M(X_s)^2\right)\nonumber\\
&\quad\quad \cdot 1_{\left\{\frac 2 M\leq |X_s|\right\}}\bigg] ds\nonumber\\&\quad+\int_{0}^{\infty} e^{-\lambda s} \E\bigg[\left(\partial_x Q_{\lambda}(X_s)\beta_s+\frac{1}{2} \partial_{xx} Q_{\lambda}(X_s)\sigma_M(X_s)^2\right) \nonumber\\ &\quad\quad \cdot 1_{\left\{0<|X_s|<\frac 2 M\right\}}\bigg] ds\nonumber\\&\quad-\frac{1}{a^2}\int_{0}^{\infty}\lambda e^{-\lambda s}\E [L^0_s]ds \nonumber\\
&=Q_{\lambda}(X_0)+\int_{0}^{\infty} \lambda e^{-\lambda s} \E\left[Q_\lambda(X_s 
)  1_{\left\{|X_s|> 0\right\}}\right] ds\nonumber\\&\quad +\int_{0}^{\infty} e^{-\lambda s} \E\left[\bigg(\partial_x Q_{\lambda}(X_s)\left(\beta_s+kb^2\mathrm{sign}(X_s)\right)+\frac{1}{2}\partial_{xx} Q_{\lambda}(X_s) \right. \nonumber \\&\qquad\left. \cdot\left(\sigma_M(X_s)^2-b^2\right)\bigg) 1_{\left\{0<|X_s|< \frac 2 M\right\}}\right] ds\nonumber\\&\quad-\frac{1}{a^2}\int_{0}^{\infty}\lambda e^{-\lambda s}\E [L^0_s]ds\nonumber
\end{align}
where we used for the first inequality independence of $T$ and $X$ and \cite[Theorem VI. 1.5]{revuz2013continuous} and rearranged terms for the second equality.

There exists constant $C_1:=\sup_{|z|\in (0,2]}\left(|\partial_x Q_{\lambda}(z)|2kb^2+\frac{1}{2}\left|\partial_{xx} Q_{\lambda}(z)\right|b^2\right)$ does not depend on $M$ and we have
\begin{align}
&\quad\int_{0}^{\infty} e^{-\lambda s} \E\bigg[\left|\partial_x Q_{\lambda}(X_s)\left(\beta_s+kb^2\mathrm{sign}(X_s)\right)+\frac{1}{2} \partial_{xx} Q_{\lambda}(X_s)\left(\sigma_M(X_s)^2-b^2\right)\right| \nonumber\\ &\quad\quad \cdot 1_{\left\{0<|X_s|< \frac 2 M\right\}}\bigg] ds\nonumber\\
&\leq \int_{0}^{\infty}  e^{-\lambda s} \sup_{|z|\in (0,2]}\left(|\partial_x Q_{\lambda}(z)|2kb^2+\frac{1}{2}\left| \partial_{xx} Q_{\lambda}(z)\right|b^2\right) \mathbb P\left( 0<\left|X_s\right|< \frac 2 M\right) ds\nonumber\\
&\leq \int_{0}^{\infty} e^{-\lambda s} C_1 \mathbb P\left(|X_s|< \frac 2 M\right)ds\nonumber
=: \epsilon_M. \nonumber
\end{align} 

Thus, we have
\begin{align*}
    Q_{\lambda}(x) \leq \frac{1}{a^2} \E [L^0_T]+\epsilon_M.
\end{align*}
Applying Lemma \ref{lem:Q geq V} to $\epsilon_M$ we find
\begin{align*}
    \epsilon_M&\leq C_1\int_{-2/M}^{2/M}Q_{\lambda}(x)dx\leq \frac{4C_1 Q_{\lambda}(0)}{M}.
\end{align*}
Note that $C_2:=4C_1Q_{\lambda}(0)$ does not depend on the specific choice of $M$ so we have
$$ \epsilon_M \leq \frac{C_2}{M}.$$

We find that 
\begin{align*}
  Q_{\lambda}(x) &\leq \frac{1}{a^2}  \E[L^0_T] + \frac{C_2}{M} \\
             &= \frac1{a^2} \left(\limsup_{N \rightarrow \infty}  \frac N2 \E\left[\int_0^\infty \lambda e^{-\lambda s} 1_{\left\{|X_s|\leq \frac 1 N\right\}}d\<X,X\>_s\right]\right) + \frac{C_2}{M} \\
             &\leq \sup_{Z \in \mathcal C_{\mathcal A}, Z_0=x}\left(\limsup_{N \rightarrow \infty}  \frac N2 \E\left[\int_0^\infty \lambda e^{-\lambda s} 1_{\left\{|Z_s|\leq \frac 1 N\right\}}ds\right]\right) + \frac{C_2}{M} \\
             &= V_\lambda(x) + \frac{C_2}{M}
\end{align*}
where we used \cite[Corollary VI.1.9]{revuz2013continuous} for the first equality.
This is true for all $M\in\mathbb N_+$ and consequently, $V_{\lambda}(x)=Q_{\lambda}(x)$ for any $\lambda>0, x\in\mathbb R$ as required.
\end{proof}

\section{Fixed time horizon optimization problem}\label{s:H}
 Throughout this section, we choose $y\in\mathbb R$ to be fixed. Define $r:=|x-y|$ which is the distance between the starting point $x$ and the level $y$ whenever $x\in\mathbb R$ is given. For this section, we write $V_{\lambda}(r):=V_{\lambda}(x,y)$. We would like to find the inverse Laplace transform $H_T(r)$ of $\lambda\rightarrow V_{\lambda}(r)/\lambda$ at position $T\geq 0$, which means we want to find some function $H:\mathbb R_+\times \mathbb R_+\rightarrow \mathbb R$ which satisfies the following equation:
$$V_{\lambda}(r)=\int_{0}^{\infty} \lambda e^{-\lambda T}H_T(r)dT, \quad r\in \mathbb R_+, \lambda>0.$$

\begin{lem}\label{H:explicit}
	The inverse Laplace transform of $\lambda\rightarrow \frac{V_{\lambda}(r)}{\lambda}$ is given by
\begin{align}\label{eq:H}
		H_T(r)=&\int_{0}^{T} \left(\frac{b}{a^2\sqrt{t}}\phi(v(r,t))+\frac{b^2 k}{a^2} \Phi(v(r,t))\right)dt, \quad T> 0, r\geq 0 
\end{align}
		where $v(r,t):= kb\sqrt{t}-\frac{r}{b\sqrt{t}}$, $\phi(z):=\frac{1}{\sqrt{2\pi}}e^{-\frac{z^2}{2}}$ and $\Phi(z):=\frac{1}{\sqrt{2\pi}}\int_{-\infty}^{z}e^{-\frac{s^2}{2}}ds$.
\end{lem}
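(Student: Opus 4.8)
The plan is to verify directly that the Laplace transform of the proposed $H_T(r)$ reproduces $V_\lambda(r)$; by injectivity of the Laplace transform this is all that is needed. Put $\gamma=\gamma(\lambda):=\sqrt{k^2+\tfrac{2\lambda}{b^2}}$, so $\gamma>k\ge 0$, and recall that combining Lemma~\ref{S:general} with Theorem~\ref{t:V=Q} gives $V_\lambda(r)=\tfrac{1}{a^2(\gamma-k)}e^{(k-\gamma)r}$. Since the integrand defining $H_T(r)$ is nonnegative, Tonelli's theorem yields $\int_0^\infty\lambda e^{-\lambda T}H_T(r)\,dT=\int_0^\infty e^{-\lambda t}h_t(r)\,dt$ with $h_t(r):=\partial_t H_t(r)=\tfrac{b}{a^2\sqrt t}\phi(v(r,t))+\tfrac{b^2k}{a^2}\Phi(v(r,t))$; hence it suffices to prove $\mathcal L[h_\cdot(r)](\lambda)=V_\lambda(r)$ for all $r\ge 0$, $\lambda>0$. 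One should also note at this point that $H_T(r)<\infty$: near $t=0$ one has $t^{-1/2}\phi(v(r,t))=O(t^{-1/2})$ for every $r\ge 0$, and $\Phi$ is bounded.

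First I would treat $r>0$. Completing the square in $v(r,t)^2=k^2b^2t-2kr+\tfrac{r^2}{b^2t}$ gives $\phi(v(r,t))=\tfrac{e^{kr}}{\sqrt{2\pi}}\,e^{-k^2b^2t/2}\,e^{-r^2/(2b^2t)}$. Feeding this into the classical identities $\int_0^\infty t^{-1/2}e^{-pt-q/t}\,dt=\sqrt{\pi/p}\,e^{-2\sqrt{pq}}$ and $\int_0^\infty t^{-3/2}e^{-pt-q/t}\,dt=\sqrt{\pi/q}\,e^{-2\sqrt{pq}}$ (valid for $p,q>0$) with $p=\lambda+\tfrac{k^2b^2}{2}$, $q=\tfrac{r^2}{2b^2}$ — note $2\sqrt{pq}=r\gamma$ — I obtain
$$\mathcal L\!\left[\tfrac{1}{\sqrt{\cdot}}\,\phi(v(r,\cdot))\right](\lambda)=\tfrac{1}{b\gamma}\,e^{(k-\gamma)r},\qquad \mathcal L\!\left[\tfrac{1}{(\cdot)^{3/2}}\,\phi(v(r,\cdot))\right](\lambda)=\tfrac{b}{r}\,e^{(k-\gamma)r}.$$
For the $\Phi$-term I would integrate by parts in the Laplace integral, using $\partial_t\Phi(v(r,t))=\phi(v(r,t))\,\partial_t v(r,t)$ with $\partial_t v(r,t)=\tfrac{1}{2\sqrt t}\big(kb+\tfrac{r}{bt}\big)$; for $r>0$ the boundary terms vanish, since $\Phi(v(r,0^+))=\Phi(-\infty)=0$ and $e^{-\lambda t}\Phi(v(r,t))\to 0$ as $t\to\infty$. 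This gives
$$\mathcal L[\Phi(v(r,\cdot))](\lambda)=\tfrac{1}{\lambda}\int_0^\infty e^{-\lambda t}\,\phi(v(r,t))\,\partial_t v(r,t)\,dt=\tfrac{kb}{2\lambda}\cdot\tfrac{1}{b\gamma}\,e^{(k-\gamma)r}+\tfrac{r}{2b\lambda}\cdot\tfrac{b}{r}\,e^{(k-\gamma)r}=\tfrac{1}{2\lambda}\Big(1+\tfrac{k}{\gamma}\Big)e^{(k-\gamma)r}.$$
Using $2\lambda=b^2(\gamma^2-k^2)$ the right-hand side is $\tfrac{1}{b^2\gamma(\gamma-k)}e^{(k-\gamma)r}$, whence $\mathcal L[h_\cdot(r)](\lambda)=\tfrac{e^{(k-\gamma)r}}{a^2}\big(\tfrac1\gamma+\tfrac{k}{\gamma(\gamma-k)}\big)=\tfrac{e^{(k-\gamma)r}}{a^2(\gamma-k)}=V_\lambda(r)$.

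It then remains to handle $r=0$, where the integration-by-parts boundary term is $\Phi(v(0,0^+))=\Phi(0)=\tfrac12\neq 0$. This can either be absorbed by carrying the extra constant through the computation, or — more cleanly — dealt with by observing that both $r\mapsto\mathcal L[h_\cdot(r)](\lambda)$ and $r\mapsto V_\lambda(r)$ are continuous at $0$ (dominated convergence for the former, inspection for the latter) and passing to the limit $r\downarrow 0$. I expect the genuinely non-formal points to be the justification of the two classical $t^{-1/2}$- and $t^{-3/2}$-Laplace formulas with the stated parameters, the interchange of differentiation/integration underlying the integration by parts (finiteness of the differentiated integrand, which the explicit answer confirms), and this $r=0$ edge case; conceptually the only mildly delicate step is the bookkeeping that makes the $\tfrac1\gamma$ and $\tfrac{k}{\gamma(\gamma-k)}$ contributions telescope to $\tfrac1{\gamma-k}$, everything else being routine.

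As an optional sanity check I would remark that $H_T(r)=\tfrac{1}{a^2}\E[L_T^0(X^\ast)]$, where $X^\ast$ solves $dX^\ast_t=-kb^2\sign(X^\ast_t)\,dt+b\,dW_t$ with $|X^\ast_0-y|=r$ (the "bang--bang" maximiser implicitly constructed in the proof of Theorem~\ref{t:V=Q}); applying Tanaka's formula to $|X^\ast|$ reduces $\E[L_T^0(X^\ast)]$ to $\E[R_T]-r+kb^2T$ for a drifted reflected Brownian motion $R$, whose fixed-time law is classical, and this independently recovers \eqref{eq:H}.
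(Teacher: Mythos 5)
Your proposal is correct and follows essentially the same route as the paper: both verify that $\lambda\mathcal L[H_\cdot(r)](\lambda)=V_\lambda(r)$ with $V_\lambda=Q_\lambda$ from Theorem \ref{t:V=Q}, the only difference being that the paper cites the Borodin--Salminen handbook identities while you derive the needed $t^{-1/2}$- and $t^{-3/2}$-transforms and the $\Phi$-term (including the $r=0$ boundary case) by hand. No gaps.
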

\begin{proof}
Using \cite[Appendix 3, Eq.\ (a), (d) \& (10)]{borodin2015handbook}, we find that the Laplace transform of $T\mapsto H_T$ given by Equation \eqref{eq:H} satisfies
$$\lambda\mathcal{L}[H_{\cdot}(r)](\lambda)=V_{\lambda}(r)$$
for any $\lambda>0, r\geq 0$.
%
\end{proof}

Next, we gather some properties of $H_T$, which are given in Lemma \ref{H:explicit}. We find that $r\mapsto H_T(r)$ is positive, decreasing, and convex and we identify the right-handed limit of its derivative $\partial_r H_T(r)$ at $r=0$ is $-\frac{1}{a^2}$. 
These will be needed for verification arguments later in Lemma \ref{HJB:H} and Lemma \ref{Ito-Tanaka}. 
\begin{lem}\label{mono}
	Let $H$ be as in Lemma \ref{H:explicit}. For any $r> 0$ and $T\geq 0$, we have 
	\begin{align}
		 {\partial_r H_T(r)}&\leq 0\leq {\partial_{rr} H_T (r)}\nonumber\nonumber		
	\end{align}
	and for any $T>0$ we also have
	\begin{align}\label{first der}
		\lim_{r\downarrow 0}{\partial_r  H_{T}(r)}=-\frac{1}{a^2}.
	\end{align}
\end{lem}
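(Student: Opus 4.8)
The plan is to work directly from the explicit integral formula in Lemma~\ref{H:explicit}, differentiating under the integral sign in $r$. Writing $H_T(r)=\int_0^T h(r,t)\,dt$ with $h(r,t)=\frac{b}{a^2\sqrt t}\phi(v(r,t))+\frac{b^2k}{a^2}\Phi(v(r,t))$ and $v(r,t)=kb\sqrt t-\frac{r}{b\sqrt t}$, the first step is to compute $\partial_r v=-\frac{1}{b\sqrt t}$ and $\partial_r h$. Using $\phi'(z)=-z\phi(z)$ and $\Phi'(z)=\phi(z)$, a short calculation gives
\begin{align*}
\partial_r h(r,t)=\Big(\frac{b}{a^2\sqrt t}\cdot v(r,t)\cdot\frac{1}{b\sqrt t}-\frac{b^2k}{a^2}\cdot\frac{1}{b\sqrt t}\Big)\phi(v(r,t))
=\frac{1}{a^2 t}\Big(v(r,t)-bk\sqrt t\Big)\phi(v(r,t)).
\end{align*}
Since $v(r,t)-bk\sqrt t=-\frac{r}{b\sqrt t}\le 0$ for $r\ge 0$, this shows $\partial_r h\le 0$, hence $\partial_r H_T(r)=\int_0^T \partial_r h(r,t)\,dt\le 0$; I should note the dominated-convergence / local-integrability justification for differentiating under the integral, which is routine because $\phi$ is bounded and the singular factors $t^{-1/2}$, $t^{-1}$ combined with the Gaussian decay as $t\downarrow 0$ are integrable near $0$ for fixed $r>0$.

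For convexity, the cleanest route is to observe that $\partial_r h(r,t)=-\frac{r}{a^2 b t^{3/2}}\phi(v(r,t))$ after substituting $v-bk\sqrt t=-r/(b\sqrt t)$, and then differentiate once more in $r$. Using $\partial_r\big(\phi(v)\big)=-v\,\phi(v)\,\partial_r v=\frac{v}{b\sqrt t}\phi(v)$, we get
\begin{align*}
\partial_{rr} h(r,t)=-\frac{1}{a^2 b t^{3/2}}\phi(v(r,t))-\frac{r}{a^2 b t^{3/2}}\cdot\frac{v(r,t)}{b\sqrt t}\phi(v(r,t))
=-\frac{\phi(v(r,t))}{a^2 b t^{3/2}}\Big(1+\frac{r\,v(r,t)}{b\sqrt t}\Big).
\end{align*}
Now $\frac{r}{b\sqrt t}=bk\sqrt t-v(r,t)$, so $1+\frac{r v}{b\sqrt t}=1+v(bk\sqrt t-v)=1+bk\sqrt t\,v-v^2$. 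This is not obviously signed, so the direct second-derivative approach needs a little more care; the more robust alternative is to differentiate the already-simplified first derivative $\partial_r H_T(r)=-\frac{r}{a^2 b}\int_0^T t^{-3/2}\phi(v(r,t))\,dt$ and argue monotonicity of this integral in $r$. In fact it is better to go back one level: since $h$ is, for each fixed $t$, a smooth function of $r$, convexity of $H_T$ follows if $r\mapsto h(r,t)$ is convex for each $t$, equivalently $\partial_{rr}h\ge 0$; I anticipate this is the genuine sticking point and that the honest fix is to recognise $h(r,t)$ (up to the constant $1/a^2$) as the transition kernel contribution of a drifted Brownian motion — e.g.\ identifying $h(\cdot,t)$ with (a constant times) the density or tail of a Gaussian-type quantity for which log-concavity/convexity is standard — or to expand $\partial_{rr}h$ and bound it using the elementary inequality $v^2-bk\sqrt t\,v\ge -\tfrac14 b^2k^2 t$, which gives $1+bk\sqrt t\,v-v^2\le 1+\tfrac14 b^2k^2t$; that bounds $\partial_{rr}h$ from below only after pairing the negative and positive pieces across the $t$-integral, using that $\int_0^T t^{-3/2}\phi(v)\,dt$ has a known closed form from \cite{borodin2015handbook}. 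So the plan for convexity is: differentiate $\partial_r H_T(r)=-\frac{r}{a^2b}\int_0^T t^{-3/2}\phi(v(r,t))\,dt$ and show the map $r\mapsto r\int_0^T t^{-3/2}\phi(v(r,t))\,dt$ is non-decreasing, which reduces to a one-dimensional monotonicity statement provable by the substitution $u=r/(b\sqrt t)$ that turns the integral into $\int$ over a Gaussian in $u$ with a parameter appearing monotonically.

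For the boundary limit \eqref{first der}, I would start from $\partial_r H_T(r)=-\frac{r}{a^2 b}\int_0^T t^{-3/2}\phi(v(r,t))\,dt$ and substitute $u=\frac{r}{b\sqrt t}$, so $t=\frac{r^2}{b^2u^2}$, $dt=-\frac{2r^2}{b^2u^3}\,du$, $t^{-3/2}=\frac{b^3u^3}{r^3}$, and $v(r,t)=\frac{bk r}{bu}-u=\frac{kr}{u}-u$. The integral becomes, after tracking limits ($t\to 0^+$ gives $u\to\infty$, $t=T$ gives $u=r/(b\sqrt T)$),
\begin{align*}
-\frac{r}{a^2 b}\int_{r/(b\sqrt T)}^{\infty}\frac{b^3u^3}{r^3}\cdot\frac{2r^2}{b^2u^3}\,\phi\!\Big(\frac{kr}{u}-u\Big)\,du
=-\frac{2}{a^2}\int_{r/(b\sqrt T)}^{\infty}\phi\!\Big(\frac{kr}{u}-u\Big)\,du.
\end{align*}
As $r\downarrow 0$ the lower limit tends to $0$ and $\phi(kr/u-u)\to\phi(-u)=\phi(u)$ pointwise, with a dominating bound since $\phi(kr/u-u)\le$ a fixed integrable envelope for $r$ in a bounded range (the term $kr/u$ is bounded away from causing trouble because on $u\ge r/(b\sqrt T)$ one has $kr/u\le kb\sqrt T$, so $|kr/u-u|\ge |u|-kb\sqrt T$ and Gaussian decay at infinity is uniform). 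Dominated convergence then yields $\lim_{r\downarrow 0}\partial_r H_T(r)=-\frac{2}{a^2}\int_0^\infty\phi(u)\,du=-\frac{2}{a^2}\cdot\frac12=-\frac{1}{a^2}$, as claimed. The main obstacle overall is the convexity claim; everything else is a careful but routine differentiation-under-the-integral plus a substitution, whereas for $\partial_{rr}H_T\ge 0$ one really wants either a probabilistic interpretation of $H_T$ (as a multiple of an expected local time of a specific diffusion, which is automatically convex in the starting-point distance by a coupling/reflection argument) or the explicit Laplace-inversion identities of \cite{borodin2015handbook} to evaluate $\int_0^T t^{-3/2}\phi(v(r,t))\,dt$ in closed form and differentiate that.
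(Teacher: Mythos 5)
Your treatment of $\partial_r H_T\le 0$ and of the limit \eqref{first der} is sound: the computation $\partial_r H_T(r)=-\frac{r}{a^2b}\int_0^T t^{-3/2}\phi(v(r,t))\,dt$ is exactly the paper's, and your substitution $u=r/(b\sqrt t)$, giving $\partial_r H_T(r)=-\frac{2}{a^2}\int_{r/(b\sqrt T)}^\infty\phi\bigl(\tfrac{kr}{u}-u\bigr)du$ followed by dominated convergence, is a correct and in fact more direct route to the limit than the paper's (which instead shows $\lim_{r\downarrow 0}\partial_{Tr}H_T(r)=0$, so the limit is constant in $T$, and identifies the constant through $\lim_{\lambda\downarrow 0}\partial_r V_\lambda(r)=-1/a^2$).

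The convexity claim $\partial_{rr}H_T\ge 0$, however, is a genuine gap: you correctly compute $\partial_{rr}h(r,t)=\frac{\phi(v)}{a^2bt^{3/2}}\bigl(\tfrac{r^2}{b^2t}-kr-1\bigr)$, observe it changes sign in $t$, and then only list candidate strategies without carrying any of them out. Moreover, the one you settle on is misstated: since $\partial_r H_T(r)=-\frac{1}{a^2b}\,r\int_0^T t^{-3/2}\phi(v(r,t))dt$, convexity requires $r\mapsto r\int_0^T t^{-3/2}\phi(v(r,t))dt$ to be non-\emph{increasing}, not non-decreasing; and the substitution $u=r/(b\sqrt t)$ does not settle this for $k>0$, because in $\int_{r/(b\sqrt T)}^\infty\phi\bigl(\tfrac{kr}{u}-u\bigr)du$ the shrinking domain and the $r$-dependence of the integrand pull in opposite directions (for $u^2>kr$ the integrand increases in $r$). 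The elementary bound $1+bk\sqrt t\,v-v^2\le 1+\tfrac14 b^2k^2t$ also points the wrong way, as you concede. The paper closes exactly this point by exploiting $\lambda\mathcal L[H_\cdot(r)](\lambda)=V_\lambda(r)=Q_\lambda(r)$: integrating by parts, $\partial_{rr}V_\lambda(r)=\int_0^\infty e^{-\lambda T}\partial_{Trr}H_T(r)\,dT$, and letting $\lambda\downarrow 0$ in the explicit formula for $Q_\lambda$ gives $\lim_{T\to\infty}\partial_{rr}H_T(r)=0$; since $T\mapsto\partial_{rr}H_T(r)$ vanishes at $T=0$, increases on $\bigl[0,\tfrac{r^2}{(kr+1)b^2}\bigr]$ and decreases thereafter (the sign change of the integrand in $t$), it must be non-negative for all $T$. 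Either adopt this Laplace-transform/unimodality argument or genuinely execute one of your alternatives (e.g.\ the closed-form evaluation via \cite{borodin2015handbook}); as written, the middle inequality of the lemma is unproved.
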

\begin{proof}
Taking the first derivative of $H_T$ with respect to $r$ we get
\begin{align*}
    {\partial_r H_T(r)}=\int_{0}^{T} \frac{1}{a^2\sqrt{2\pi t^3}}e^{-\left( \frac{r}{\sqrt{2b^2 t}}-k\sqrt{\frac{b^2 t}{2}}\right)^2}\left(-\frac{r}{b}\right)dt, \quad T\geq 0, r>0.
\end{align*}
It is clear that ${\partial_r H_T(r)}\leq 0$ for any $T\geq 0, r\geq 0$ because the integrand is non-positive. 

Taking the second derivative of $H_T$ with respect to $r$ we get
\begin{align*}
    {\partial_{rr} H_T (r)}=\int_{0}^{T}\frac{1}{a^2 b \sqrt{2\pi t^3}}e^{-\left( \frac{r}{\sqrt{2b^2 t}}-k\sqrt{\frac{b^2 t}{2}}\right)^2}\left(\frac{r^2}{b^2 t}-kr -1\right)dt, \quad T\geq 0, r>0.
\end{align*} 
Recall that for any $\lambda>0$, $V_{\lambda}$ and its second derivative with respect to $r$ are
\begin{align*}
	V_{\lambda}(r)&=\int_0^{\infty} \lambda e^{-\lambda T} H_{T}(r)dT,& \quad &&r\geq 0\\
	{\partial_{rr} V_{\lambda}(r)}&=\int_0^{\infty} \lambda e^{-\lambda T} {\partial_{rr} H_{T}(r)}dT
	= \int_{0}^{\infty} e^{-\lambda T} {\partial_{Trr}  H_{T}(r)} dT, &&&\quad r>0
\end{align*}
where we used integration by parts for the second equality.

Let $\lambda \downarrow 0$ and take the limit for both sides, since $H_0(r)=0$, for any $r>0$, we have
\begin{align*}
\lim_{T\to \infty}{\partial_{rr}  H_{T}(r)}&=\lim_{T\to \infty}
{\partial_{rr}  H_{T}(r)}-{\partial_{rr}  H_{0}(r)}
\\&=\int_0^{\infty} \left(\lim_{\lambda\to 0} e^{-\lambda T}\right) {\partial_{Trr}  H_{T}(r)} dT\\
&=\lim_{\lambda\downarrow 0}{\partial_{rr} V_{\lambda}(r)}\\&=\lim_{\lambda\downarrow 0}\frac{\sqrt{k^2+\frac{2\lambda}{b^2}}-k}{a^2} e^{\left(k-\sqrt{k^2+\frac{2\lambda}{b^2}}\right)r} \\&=0.
\end{align*}

Observe that for any fixed $r> 0$
\begin{align*}
{\partial_{rr} H_{\cdot}(r)}: \mathbb R_+\to \mathbb R, T\mapsto \int_{0}^{T}\frac{1}{a^2 b \sqrt{2\pi t^3}}e^{-\left( \frac{r}{\sqrt{2b^2 t}}-k\sqrt{\frac{b^2 t}{2}}\right)^2}\left(\frac{r^2}{b^2 t}-kr -1\right)dt
\end{align*} 
is increasing on $\left[0,\frac{r^2}{(kr+1)b^2}\right]$ and decreasing on $ \left(\frac{r^2}{(kr+1)b^2}, \infty\right)$ with 
$${\partial_{rr} H_{0}(r)}=0, \quad \lim_{T\to \infty}{\partial_{rr}  H_{T}(r)}=0$$ which implies ${\partial_{rr}  H_{T}(r)}\geq 0$ for all $T\geq 0$ and $r>0$.

Also, we can repeat these steps for first order derivative $\frac{\partial H_T(r)}{\partial r}$ with respect to $r$ for any $r>0$ and $T>0$,
\begin{align*}
	{\partial_{r} V_{\lambda}(r)}&=\int_0^{\infty} \lambda e^{-\lambda T} {\partial_r H_T(r)}dT\\
	&=\int_{0}^{\infty} e^{-\lambda T}{\partial_{Tr} H_{T}(r)}dT
\end{align*}
where we used integration by parts.
Let $\lambda \downarrow 0$ we find
$$\lim_{T\to \infty}{\partial_r  H_{T}(r)}=\lim_{\lambda\downarrow 0}	{\partial_r V_{\lambda}(r)}=\lim_{\lambda\downarrow 0}-\frac{1}{a^2} e^{\left(k-\sqrt{k^2+\frac{2\lambda}{b^2}}\right)r} =-\frac{1}{a^2}.$$

For any fixed $T>0$. When $r\downarrow 0$ we get
\begin{align*}
	\lim_{r\downarrow 0}{\partial_{Tr} H_T(r)}&=\lim_{r\downarrow 0}\frac{1}{a^2\sqrt{2\pi T^3}}e^{-\left( \frac{r}{\sqrt{2b^2 T}}-k\sqrt{\frac{b^2 T}{2}}\right)^2}\left(-\frac{r}{b}\right)=0
\end{align*}
which means that $\lim_{r\downarrow 0}{\partial_{r} H_T(r)}$ is constant in $T$. Hence, we have 
$$\lim_{r\downarrow 0}{\partial_r H_T(r)}=\lim_{r\downarrow 0}\lim_{T\to \infty}{\partial_r H_{T}(r)}=-\frac{1}{a^2}, \quad T> 0.$$
\end{proof}

We believe the HJB-equation belonging to the original control problem \eqref{eq:OG} is 
\begin{align}
-{\partial_t G(x,y,t)}+\sup_{(\beta,\sigma) \in U}\left\{ {\partial_x G(x,y,t)}\beta+\frac12 {\partial_{xx} G(x,y,t)} \sigma^2 \right\}&=0, \quad x\neq y\label{eq:HJB1}\\
\sup_{(\beta,\sigma) \in U}\left\{1_{\{x=y\}}+\frac{\sigma^2}2  \Delta\left[{\partial_x G(\cdot,y,t)}\right](x) \right\}&=0\label{eq:HJB2}\\
G(x,y,0)&=0 \label{eq:HJB3}
\end{align}
for any $x,y\in \mathbb R$ where $U$ is defined in Equation \eqref{e:U}. 

 Equation \eqref{eq:HJB1} is a PDE with boundary condition Equation \eqref{eq:HJB3} and pasting condition \eqref{eq:HJB2}. Instead of solving the PDE directly, we use that $V_{\lambda}$ is the Laplace transform of $H_T$ and we prove that $G$ given by $G(x,y,t):=H_t(|x-y|)$ solves the equations above.
\begin{lem}\label{HJB:H}
	Let $H_T: \mathbb R\times \mathbb R\to \mathbb R_+$ be the inverse Laplace transform of $V_{\lambda}$ provided in Lemma \ref{H:explicit} and define $G(x,y,t):=H_t(|x-y|)$ for $t\geq 0$, $x,y\in\mathbb R$. Then this $G$ solves Equations \eqref{eq:HJB1}, \eqref{eq:HJB2} and \eqref{eq:HJB3}.
	\end{lem}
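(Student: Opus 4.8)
The plan is to verify the three equations directly using the explicit formula for $H_T$ from Lemma \ref{H:explicit}, together with the derivative properties collected in Lemma \ref{mono}. Equation \eqref{eq:HJB3} is immediate: since $G(x,y,0)=H_0(|x-y|)=\int_0^0(\cdots)\,dt=0$. For Equation \eqref{eq:HJB2}, observe that $G(\cdot,y,t)$ is smooth away from $y$ and, by symmetry $G(x,y,t)=H_t(|x-y|)$, its left and right $x$-derivatives at $x=y$ differ only by a sign; Lemma \ref{mono} gives $\lim_{r\downarrow 0}\partial_r H_t(r)=-1/a^2$ for $t>0$, so $\Delta[\partial_x G(\cdot,y,t)](y)=-2/a^2$ (and the jump is $0$ for $x\neq y$). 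Then $\sup_{(\beta,\sigma)\in U}\{1_{\{x=y\}}+\tfrac{\sigma^2}{2}\Delta[\partial_x G(\cdot,y,t)](x)\}$ equals, for $x=y$, $\sup_{\sigma\in[a,b]}\{1-\sigma^2/a^2\}=1-a^2/a^2=0$ (the supremum over $\sigma$ is attained at $\sigma=a$ since the coefficient of $\sigma^2$ is negative), and for $x\neq y$ it is trivially $0$; this gives \eqref{eq:HJB2}.

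The substantive step is Equation \eqref{eq:HJB1}. Fix $x\neq y$ and set $r=|x-y|$, $s=\sign(x-y)$, so $\partial_x G=s\,\partial_r H_t(r)$ and $\partial_{xx}G=\partial_{rr}H_t(r)$. By Lemma \ref{mono} we have $\partial_r H_t(r)\le 0\le\partial_{rr}H_t(r)$. Hence inside the supremum, the term $\partial_x G\cdot\beta$ is maximised over $|\beta|\le k\sigma^2$ by choosing $\beta=-k\sigma^2 s$ (pushing toward $y$), giving $\partial_x G\cdot\beta=-k\sigma^2|\partial_r H_t(r)|$; combined with $\tfrac12\partial_{xx}G\,\sigma^2=\tfrac12\sigma^2\partial_{rr}H_t(r)$, the bracket becomes $\sigma^2\big(\tfrac12\partial_{rr}H_t(r)-k|\partial_r H_t(r)|\big)$, and since $\partial_r H_t\le0$ this is $\sigma^2\big(\tfrac12\partial_{rr}H_t(r)+k\,\partial_r H_t(r)\big)$. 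Whether this is maximised at $\sigma=a$ or $\sigma=b$ depends on the sign of $\tfrac12\partial_{rr}H_t(r)+k\,\partial_r H_t(r)$; I expect the formula for $H$ to be engineered precisely so that the relevant combination picks out $\sigma=b$ (consistent with the feedback control $\sigma_M\to b$ used in the proof of Theorem \ref{t:V=Q}), reducing \eqref{eq:HJB1} to the scalar identity
\begin{equation*}
\partial_t H_t(r)=\tfrac{b^2}{2}\,\partial_{rr}H_t(r)+kb^2\,\partial_r H_t(r),\qquad r>0,\ t>0.
\end{equation*}

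So the real work is proving this last PDE for $H$. I would do this by direct differentiation of the explicit formula \eqref{eq:H}: $\partial_t H_t(r)=\tfrac{b}{a^2\sqrt t}\phi(v(r,t))+\tfrac{b^2k}{a^2}\Phi(v(r,t))$ by the fundamental theorem of calculus, while $\partial_r H_t(r)$ and $\partial_{rr}H_t(r)$ are computed (as already displayed in the proof of Lemma \ref{mono}) by differentiating under the integral sign and using $\phi'(z)=-z\phi(z)$. Substituting $v(r,t)=kb\sqrt t-r/(b\sqrt t)$ and carefully tracking the $t^{-1/2}$, $t^{-3/2}$ factors, the right-hand side $\tfrac{b^2}{2}\partial_{rr}H+kb^2\partial_r H$ should collapse — after an integration by parts in $t$ of the $\phi$-term, or equivalently by recognising that the integrand of $\partial_{rr}H$ is a total $t$-derivative — to exactly $\partial_t H_t(r)$. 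The main obstacle is purely computational bookkeeping: ensuring all the Gaussian algebra and the boundary terms from the integration by parts in $t$ match up, in particular that the boundary contribution at $t=0$ vanishes (which follows from $\phi(v(r,t))\to 0$ as $t\downarrow0$ for $r>0$) and the one at $t=T$ produces precisely the $\phi$ and $\Phi$ terms of $\partial_t H_T(r)$. Alternatively, and perhaps more cleanly, one can avoid recomputation entirely: apply the inverse Laplace transform to the ODE $-\lambda V_\lambda(r)+\tfrac{b^2}{2}\partial_{rr}V_\lambda(r)+kb^2\partial_r V_\lambda(r)=0$ for $r>0$ (which holds by Lemma \ref{S:general} and Theorem \ref{t:V=Q}, since the supremum in \eqref{eq:HJMneq} is attained at $(\beta,\sigma)=(-kb^2\sign(x-y),b)$), using that multiplication by $\lambda$ corresponds to $\partial_T$ and that $H_0(r)=0$, to obtain the heat-type PDE for $H$ directly; I would present this route as the primary argument and the direct computation as a check.
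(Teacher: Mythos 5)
Your strategy coincides with the paper's: \eqref{eq:HJB3} is immediate from $H_0\equiv 0$, \eqref{eq:HJB2} follows from $\lim_{r\downarrow0}\partial_rH_t(r)=-1/a^2$ in Lemma \ref{mono} together with symmetry, and \eqref{eq:HJB1} is obtained by inverse-Laplace-transforming the equation satisfied by $V_\lambda=Q_\lambda$, with Lemma \ref{mono} identifying the maximiser in $U$. However, there is a sign error in the drift term which, as written, makes the key step fail. With $r=|x-y|$ and $s=\sign(x-y)$, the optimal $\beta=-k\sigma^2 s$ gives $\partial_xG\cdot\beta=s\,\partial_rH_t(r)\cdot(-k\sigma^2 s)=-k\sigma^2\partial_rH_t(r)=+k\sigma^2|\partial_rH_t(r)|\ge 0$, not $-k\sigma^2|\partial_rH_t(r)|$; hence the bracket after optimising $\beta$ is $\sigma^2\bigl(\tfrac12\partial_{rr}H_t(r)-k\,\partial_rH_t(r)\bigr)\ge 0$ by Lemma \ref{mono}, so $\sigma=b$ is optimal with no case distinction --- the ambiguity you try to resolve by appealing to how $H$ is ``engineered'' is an artefact of the error. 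The same error enters your primary route: the ODE $-\lambda V_\lambda(r)+\tfrac{b^2}{2}\partial_{rr}V_\lambda(r)+kb^2\partial_rV_\lambda(r)=0$ is false for $k>0$; substituting $Q_\lambda(r)=\frac{1}{m a^2}e^{-mr}$ with $m=\sqrt{k^2+2\lambda/b^2}-k$ leaves the nonzero residual $2kb^2\bigl(k-\sqrt{k^2+2\lambda/b^2}\bigr)Q_\lambda(r)$. In the $r$-variable the correct equation is $-\lambda V_\lambda+\tfrac{b^2}{2}\partial_{rr}V_\lambda-kb^2\partial_rV_\lambda=0$, because $\partial_xQ_\lambda\cdot\beta(x)=s\,\partial_rQ_\lambda\cdot(-kb^2 s)=-kb^2\partial_rQ_\lambda$ for both $x>y$ and $x<y$; accordingly the target PDE is $\partial_tH_t(r)=\tfrac{b^2}{2}\partial_{rr}H_t(r)-kb^2\,\partial_rH_t(r)$, and your ``route B'' direct computation would not close for the PDE you wrote down.

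Once the sign is corrected, your Laplace-transform argument is exactly the paper's proof: use $\lambda\mathcal L[H_\cdot(r)](\lambda)=Q_\lambda(r)$, $\mathcal L[\partial_tH_\cdot(r)](\lambda)=\lambda\mathcal L[H_\cdot(r)](\lambda)$ (valid since $H_0=0$), and commutation of $\mathcal L$ with $\partial_r$ and $\partial_{rr}$, then invoke $\partial_rH\le 0\le\partial_{rr}H$ to recognise $(\beta,\sigma)=(-kb^2\sign(x-y),\,b)$ as the maximiser in \eqref{eq:HJB1}, and the jump condition at $r=0$ for \eqref{eq:HJB2}. Your handling of \eqref{eq:HJB2} and \eqref{eq:HJB3} is correct and matches the paper.
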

\begin{proof}
Lemma \ref{H:explicit} yields that $H$ satisfies Equation \eqref{eq:HJB3}.
  
	 We recall that Theorem \ref{t:V=Q} yields
	\begin{align}
	-\lambda Q_{\lambda}(x,y) +\sup_{(\beta,\sigma) \in U}\left\{\partial_x Q_{\lambda}(x,y)\beta + \frac 1 2 \partial_{xx} Q_{\lambda}(x,y)\sigma^2\right\} = 0, \quad x\neq y\label{eq:LT:1}
	\end{align}
 for any $x,y\in\mathbb R$ with $x\neq y$ and $\lambda >0$ where the supremum is attained in:
	\begin{equation*}
	\left(\beta(x),\sigma(x)\right)=
	\begin{cases}
	(-kb^2, b) & \text{if} \quad x>y,\\
	(0, a) & \text{if} \quad x=y,\\
	(kb^2, b) & \text{if} \quad x<y.\\
	\end{cases}      
	\end{equation*}
 Since $\lambda\mathcal{L}[H_{\cdot}(r)](\lambda)=Q_{\lambda}(r)$, and we have $\mathcal L[\partial_{\cdot} H_{\cdot}(r)](\lambda)=\lambda\mathcal L[H_{\cdot}(r)](\lambda)$, $\mathcal{L}[\partial_r H_{\cdot}(r)](\lambda)=\partial_r\mathcal{L}[ H_{\cdot}(r)](\lambda)$ and $\mathcal{L}[\partial_{rr} H_{\cdot}(r)](\lambda)=\partial_{rr}\mathcal{L}[ H_{\cdot}(r)](\lambda)$ for $\lambda>0$ and $r\geq 0$, taking inverse Laplace transform on Equation \eqref{eq:LT:1} yields with our convention $r=|x-y|$ that
  $$ -{\partial_t H_t(r)}+\left\{ {\partial_r H_t(r)}\beta(x)+\frac12 {\partial_{rr} H_t(r)} \sigma(x)^2 \right\}=0, \quad t>0.$$
    Lemma \ref{mono} yields that
     $$ {\partial_r H_t(r)}\beta(x)+\frac12 {\partial_{rr} H_t(r)} \sigma(x)^2 = \sup_{(\beta,\sigma)\in U}\left\{{\partial_r H_t(r)}\beta+\frac12 {\partial_{rr} H_t(r)} \sigma^2\right\}$$
   and, hence, $G$ solves Equation \eqref{eq:HJB1}. 
 
	That $G$ solves Equation \eqref{eq:HJB2} follows immediately from the last equality shown in Lemma \ref{mono}. Lemma \ref{H:explicit}  yields that $G$ satisfies Equation \eqref{eq:HJB3}.
\end{proof}

Before we prove our main result, we prove that a time-inhomogeneous variation of the It\^o-Tanaka's formula holds in a specific setup.
\begin{lem}\label{Ito-Tanaka}
	Let $H$ be given as in Lemma \ref{H:explicit} and define $f(t,x):= H_{T-t}(|x|)$, $0\leq t\leq T$ and $x\in\mathbb R$. Let $X\in\mathcal C_{\mathcal A}$ with $X_0$ deterministic and denote its drift and diffusion coefficients by $\beta$ and $\sigma$ where $(\beta, \sigma)\in \mathcal A$.
 Then we have
	\begin{align*}
	f(t,X_t) &=f(0,X_0)+\int_0^t \partial_t f(s,X_s)ds+\int_0^t \partial_x f(s,X_s)dX_s+\frac{1}{2}\int_0^t \partial_{xx}f(s,X_s)\sigma^2_sds\nonumber\\&\quad -\frac{1}{a^2}L^0_t\nonumber
	\end{align*}
 for any $t\in [0,T)$.
\end{lem}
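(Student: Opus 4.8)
The plan is to apply the classical It\^o-Tanaka formula to the process $x \mapsto f(t,x) = H_{T-t}(|x|)$ but to handle the explicit time-dependence by a combination of mollification in the time variable and the occupation-times formula, exactly as in the proof of Lemma~\ref{lem:Q geq V}. Concretely, first I would fix $T>0$ and note from Lemma~\ref{H:explicit} and Lemma~\ref{mono} that for each fixed $s<T$ the map $x\mapsto f(s,x)$ is absolutely continuous, convex, $C^2$ on $\mathbb R\setminus\{0\}$, with left derivative $\partial_x f(s,x)$ of bounded variation whose only jump is at $0$, namely $\Delta[\partial_x f(s,\cdot)](0) = 2\lim_{r\downarrow 0}\partial_r H_{T-s}(r) = -\tfrac{2}{a^2}$ (using \eqref{first der}). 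This is the spatial structure that makes It\^o-Tanaka applicable; the new ingredient is the time regularity.

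The main step is to justify the joint expansion. I would proceed by a time-discretisation: partition $[0,t]$ into $0=t_0<t_1<\dots<t_n=t$, write
\begin{align*}
f(t,X_t)-f(0,X_0) = \sum_{i=0}^{n-1}\big(f(t_{i+1},X_{t_{i+1}})-f(t_i,X_{t_{i+1}})\big) + \sum_{i=0}^{n-1}\big(f(t_i,X_{t_{i+1}})-f(t_i,X_{t_i})\big),
\end{align*}
and treat the two sums separately. For the first sum, I would use that $s\mapsto H_{T-s}(r)$ is $C^1$ with $\partial_s f(s,x) = -\partial_t H_{T-t}(|x|)\big|_{t=s}$ continuous and locally bounded away from $s=T$ (from the explicit formula in Lemma~\ref{H:explicit}, the integrand is bounded on compact $t$-intervals uniformly in $r\ge 0$), so this sum converges to $\int_0^t \partial_t f(s,X_s)\,ds$ by dominated convergence. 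For the second sum I would apply, for each fixed $i$, the standard time-homogeneous It\^o-Tanaka formula \cite[Theorem VI.1.5]{revuz2013continuous} to $x\mapsto f(t_i,x)$ over the interval $[t_i,t_{i+1}]$, producing the drift, martingale, and local-time terms, and then pass to the limit along a refining sequence of partitions. The convexity of $x\mapsto f(s,x)$ and the occupation-times formula \cite[Corollary VI.1.6]{revuz2013continuous} let me control the $\partial_{xx}f$ and local-time contributions; the jump term sums to $\tfrac12\,\Delta[\partial_x f(s,\cdot)](0)\,L^0_t = -\tfrac{1}{a^2}L^0_t$ since the jump size $-\tfrac{2}{a^2}$ does not depend on $s$.

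Alternatively, and perhaps more cleanly, I would mollify in time: set $f^\varepsilon(s,x):=\int H_{T-s+u}(|x|)\,\chi_\varepsilon(u)\,du$ for a smooth kernel $\chi_\varepsilon$, so that $f^\varepsilon$ is $C^1$ in $s$ and still convex in $x$ with the same constant jump $-\tfrac{2}{a^2}$ in $\partial_x f^\varepsilon(s,\cdot)$ at $0$ (jumps commute with averaging a fixed-sign quantity). For $f^\varepsilon$ one has the time-inhomogeneous It\^o-Tanaka formula by the generalized It\^o formula for $C^{1}$-in-time, convex-in-space functions (e.g.\ \cite[Exercise VI.1.25]{revuz2013continuous} or a direct Riemann-sum argument as above), giving
\begin{align*}
f^\varepsilon(t,X_t) = f^\varepsilon(0,X_0)+\int_0^t \partial_s f^\varepsilon(s,X_s)\,ds+\int_0^t \partial_x f^\varepsilon(s,X_s)\,dX_s+\frac12\int_0^t \partial_{xx}f^\varepsilon(s,X_s)\sigma_s^2\,ds-\frac{1}{a^2}L^0_t,
\end{align*}
and then let $\varepsilon\downarrow 0$. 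Here $\partial_x f^\varepsilon\to\partial_x f$ and $\partial_s f^\varepsilon\to\partial_s f$ boundedly on $[0,t]$ for $t<T$ (using the explicit formulas and Lemma~\ref{mono}), the stochastic integral converges in probability by dominated convergence since $|\partial_x f|\le \tfrac1{a^2}$, and the $\partial_{xx}f^\varepsilon$ term converges after rewriting it via the occupation-times formula as an integral against local time in the space variable, using that $z\mapsto L^z_t$ is continuous and $\partial_{xx}H_{T-s}$ is locally integrable in $s$ near $0$. The main obstacle is exactly this last convergence: $\partial_{xx}f(s,x)=\partial_{rr}H_{T-s}(|x|)$ may blow up as $s\uparrow T$ (the factor $1/\sqrt{(T-s)^3}$ in the integrand of $\partial_{rr}H$), so one must either restrict to $t<T$ as stated, or verify that the time-integral $\int_0^t \partial_{rr}H_{T-s}(|X_s|)\,ds$ is still finite a.s.\ by combining the local integrability of $\partial_{rr}H_{T-s}(r)$ near $s=T$ (the singularity $1/\sqrt{T-s}$ after the time-integration inside $H$ is integrable) with the occupation-times formula; since the statement only claims $t\in[0,T)$, restricting away from $T$ suffices and keeps all terms bounded.
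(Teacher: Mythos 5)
Your strategy is sound and it is genuinely different from the paper's. The paper applies the spatial It\^o--Tanaka formula \emph{once}, at the frozen terminal time $t$, and then removes the time-freezing analytically: it applies It\^o's formula to $x\mapsto \partial_t f(s,x)$ (which is $C^2$ in space) and interchanges the order of integration (Fubini plus a stochastic-Fubini step for the $du\,dX_s$ term), so that $\int_0^t\partial_t f(s,X_0)\,ds$ and the correction terms recombine into $\int_0^t\partial_t f(s,X_s)\,ds$ and convert $\partial_{xx}f(t,\cdot)$ into $\partial_{xx}f(s,\cdot)$; no partitions or mollification appear, at the price of handling the mixed derivatives $\partial_{tx}f$, $\partial_{xxt}f$ and the integral interchanges. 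Your telescoping (or time-mollification) route is the more standard path to a time-inhomogeneous It\^o--Tanaka formula, and its key observation is exactly right: since the kink size $-\tfrac{2}{a^2}$ of $\partial_x f(s,\cdot)$ at $0$ does not depend on $s$, the local-time contributions from the frozen-time applications add up exactly to $-\tfrac{1}{a^2}L^0_t$, with no limiting argument needed for that term. The dominated-convergence justifications you flag are indeed available for $t<T$: on $[0,t]\times\mathbb R$ the derivatives $\partial_t f$, $\partial_x f$, $\partial_{xx}f$ are bounded (boundedness of $\partial_{xx}f(s,x)=\partial_{rr}H_{T-s}(|x|)$ near $x=0$ follows, e.g., from the ODE relation $\partial_{rr}H_u=\tfrac{2}{b^2}\left(\partial_u H_u+kb^2\,\partial_r H_u\right)$ implicit in Lemma \ref{HJB:H} together with Lemma \ref{mono} and the explicit formula for $\partial_u H_u$), and the set $\{s:X_s=0\}$ is Lebesgue-null a.s.\ by the occupation times formula and $\sigma^2\geq a^2>0$, which gives the a.e.\ convergence needed for the Riemann-sum and stochastic-integral limits. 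So what the paper's route buys is the avoidance of any partition/mollification limit; what yours buys is a more elementary and self-contained argument relying only on the frozen-time formula and routine convergence.

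One correction: $x\mapsto f(s,x)=H_{T-s}(|x|)$ is \emph{not} convex; it is convex on each half-line but has a concave kink at $0$ (the jump of the left derivative is $-\tfrac{2}{a^2}<0$ --- if the function were convex this jump would be nonnegative). So you cannot invoke It\^o--Tanaka for convex functions verbatim; you need the version for differences of convex functions, equivalently for functions whose (left) derivative is of bounded variation, which is how the paper uses \cite[Theorem VI.1.5]{revuz2013continuous}. This is a harmless slip, since the rest of your argument only uses the bounded-variation structure of $\partial_x f(s,\cdot)$, the value of its jump at $0$, and the occupation times formula, all of which you identify correctly.
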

\begin{proof}
 
	Applying It\^o-Tanaka's formula \cite[Theorem VI 1.6]{revuz2013continuous} for the function $x\mapsto f(t,x)$ when $t$ is fixed we find
	\begin{align}\label{eq:f}
	f(t,X_t)&=  f(t,X_0)+ \int_{0}^{t} \partial_x f(t,X_s)dX_s+\frac{1}{2}\int_{\mathbb R} L^z_t \partial_{xx} f(t,dz) \nonumber \\
	&= f(0,X_0)+\int_0^{t} \partial_t f(s,X_0)ds+\int_{0}^{t} \left(\partial_x f(s,X_s)+\int_{s}^{t}\partial_{tx}f(u,X_s)du\right)dX_s\nonumber \\
	&\quad+\frac12\left(\int_\mathbb{R}L^z_t \partial_{xx}f(t,z)dz+\int_{\mathbb R}L^0_t\Delta [\partial_x f(t,\cdot)](x)\delta_0(dx)\right)\nonumber \\
	&=f(0,X_0)+\int_0^{t} \partial_t f(s,X_0)ds+\int_{0}^{t} \left(\partial_x f(s,X_s)+\int_{s}^{t}\partial_{tx}f(u,X_s)du\right)dX_s\nonumber \\
	&\quad+\frac12 \int_0^t \partial_{xx} f (t,X_s)\sigma_s^2ds-\frac{1}{a^2}L^0_t
	\end{align}
	where the last equation is from:
        \begin{align*}
            \Delta [\partial_x f(t,\cdot)](0)=\partial_x f(t,0^+)-\partial_x f(t,0^-)=-\frac{2}{a^2}
        \end{align*}
        as Equation \eqref{first der} implies $\partial_x f(t,0^+)=-\frac{1}{a^2}$ and we have $\partial_x f(t,0^-)=\frac{1}{a^2}$ by symmetry.
        
        Since $x \mapsto \partial_t f(s, x) $ is a $C^2$-function, using It\^o's formula we have
	\begin{align*}
	\partial_t f(s,X_s)=\partial_t f(s,X_0)+\int_0 ^s \partial_{xt}f(s,X_u)dX_u+\frac12 \int_0^s \partial_{xxt}f (s,X_u)\sigma_u^2du.    
	\end{align*}
	Applying Fubini's theorem yields
	\begin{align*}
	\int_{0}^{t}\partial _t f(s,X_s)ds&=\int_{0}^{t} \partial_t f(s, X_0)ds+ \int_{0}^{t}\int_{0}^{s}\partial_{xt}f(s, X_u)dX_uds\\&\quad+\frac12\int_{0}^{t}\int_{0}^{s} \partial_{xxt}f(s,X_u)\sigma_u^2duds \\ 
	&= \int_{0}^{t} \partial_t f(s, X_0)ds+ \int_{0}^{t}\int_{s}^{t}\partial_{xt}f(u, X_s)dudX_s\\&\quad+\frac12\int_{0}^{t}\int_{u}^{t} \partial_{xxt}f (s,X_u)\sigma_u^2dsdu \\
        &= \int_{0}^{t} \partial_t f(s, X_0)ds+ \int_{0}^{t}\int_{s}^{t}\partial_{xt}f(u, X_s)dudX_s\\&\quad+\frac12\int_{0}^{t}(\partial_{xx}f(t,X_u)-\partial_{xx}f(u,X_u))\sigma_u^2du. \\
	\end{align*}
	Substituting 
 \begin{align*}
     &\quad\int_0^t \partial_t f(s,X_0)ds+\int_{0}^{t}\int_{s}^{t}\partial_{tx}f(u, X_s)dudX_s\\&= \int_0^t \partial_t f(s,X_s)ds-\frac12\int_{0}^{t}\left(\partial_{xx} f(t,X_u)-\partial_{xx}f(u,X_u)\right)\sigma_u^2du
 \end{align*}
 into Equation \eqref{eq:f}, we find 
	\begin{align*}
	    f(t,X_t)&=f(0,X_0)+\int_0^t \partial_t f(s,X_s)ds+\int_0^t \partial_x f(s,X_s)dX_s+\frac{1}{2}\int_0^t \partial_{xx}f(s,X_s)\sigma^2_sds
	\\&\quad-\frac{1}{a^2}L^0_t.
	\end{align*}	
\end{proof}

We proceed to show that $G=H$ where $G$ is given in Problem \eqref{eq:OG}. Like in the exponentially stopped case we first show that $G\leq H$ and then use a self-improvement technique in the proof of Theorem \ref{t:main result 2} below to show that indeed $G=H$.
\begin{lem}\label{lem:leq}	For the function $G$ given in Problem \eqref{eq:OG} we have  $G(x,y,T) \leq H_{T}(|x-y|)$ for any $T\geq 0$ and $x,y\in\mathbb R$ where $H$ is defined in Lemma \ref{H:explicit}.
\end{lem}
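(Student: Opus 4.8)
\textbf{Proof plan for Lemma \ref{lem:leq}.}

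The plan is to mimic the verification argument of Lemma \ref{lem:Q geq V}, but now in the fixed-time (non-Laplace-transformed) setting, using the time-inhomogeneous It\^o--Tanaka formula established in Lemma \ref{Ito-Tanaka} in place of the ordinary It\^o--Tanaka formula. Fix $x,y\in\mathbb R$ and $T\geq 0$; without loss of generality take $y=0$. Let $X\in\mathcal C_{\mathcal A}$ with $X_0=x$ and coefficients $(\beta,\sigma)\in\mathcal A$. Set $f(t,z):=H_{T-t}(|z|)$ for $0\leq t\leq T$ as in Lemma \ref{Ito-Tanaka}. First I would apply Lemma \ref{Ito-Tanaka} to get, for $t\in[0,T)$,
$$f(t,X_t)=f(0,X_0)+\int_0^t\left(\partial_t f(s,X_s)+\partial_x f(s,X_s)\beta_s+\tfrac12\partial_{xx}f(s,X_s)\sigma_s^2\right)ds+\int_0^t\partial_x f(s,X_s)\sigma_s\,dW_s-\frac{1}{a^2}L^0_t,$$
using $dX_s=\beta_s\,ds+\sigma_s\,dW_s$.

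Next I would take expectations. The stochastic integral is a true martingale (its integrand is bounded since $\partial_x f$ is bounded by $1/a^2$ and $\sigma_s\leq b$), so it has zero expectation. The key step is the drift estimate: since $\partial_t f(t,z)=-\partial_T H_{T-t}(|z|)$ and $G(z,0,T-t)=H_{T-t}(|z|)$ solves the HJB equation \eqref{eq:HJB1} for $z\neq 0$ (Lemma \ref{HJB:H}), we have, with $\partial_x f$ and $\partial_{xx}f$ evaluated off the origin,
$$\partial_t f(s,z)+\partial_x f(s,z)\beta+\tfrac12\partial_{xx}f(s,z)\sigma^2\leq \partial_t f(s,z)+\sup_{(\beta',\sigma')\in U}\left\{\partial_x f(s,z)\beta'+\tfrac12\partial_{xx}f(s,z)\sigma'^2\right\}=0$$
for every admissible $(\beta,\sigma)\in U$ and $z\neq 0$; on $\{X_s=0\}$ the second derivative is understood as zero (the set of such $s$ has Lebesgue measure governed by the local time term which is already accounted for), and by Lemma \ref{mono} the integrand over $\{X_s=0\}$ contributes nothing adverse. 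Thus $\E[f(t,X_t)]\leq f(0,X_0)-\frac{1}{a^2}\E[L^0_t]$, i.e.\ $\frac{1}{a^2}\E[L^0_t]\leq H_T(r)-\E[f(t,X_t)]$ with $r=|x|$.

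Finally I would let $t\uparrow T$. Since $f(t,z)=H_{T-t}(|z|)\geq 0$ and $H_0\equiv 0$, one has $f(t,X_t)\to 0$ and, using positivity and e.g.\ monotone/dominated convergence together with continuity of $H$ in its time argument, $\E[f(t,X_t)]\to 0$; likewise $\E[L^0_t]\uparrow\E[L^0_T]$ by monotone convergence. This yields $\frac{1}{a^2}\E[L^0_T]\leq H_T(r)$, and by translation the analogous bound $\frac{1}{a^2}\E[L^y_T]\leq H_T(|x-y|)$ for every $y$. From here the passage from local time to occupation density is verbatim the argument in Lemma \ref{lem:Q geq V}: by the occupation times formula and $\sigma_s\geq a$,
$$\E\left[\frac{N}2\int_0^T 1_{\{|X_s-y|\leq 1/N\}}ds\right]\leq \frac{N}{2}\int_{y-1/N}^{y+1/N}\frac{1}{a^2}\E[L^z_T]\,dz\leq \frac{N}{2}\int_{y-1/N}^{y+1/N}H_T(|x-z|)\,dz,$$
and since $z\mapsto H_T(|x-z|)$ is continuous, letting $N\to\infty$ gives $\limsup_N\E[\tfrac N2\int_0^T 1_{\{|X_s-y|\leq 1/N\}}ds]\leq H_T(|x-y|)$; taking the supremum over $X\in\mathcal C_{\mathcal A}$ with $X_0=x$ gives $G(x,y,T)\leq H_T(|x-y|)$.

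The main obstacle I anticipate is the boundary/limiting behaviour as $t\uparrow T$: Lemma \ref{Ito-Tanaka} is only stated for $t\in[0,T)$, so one must justify passing to the limit $t\to T$ in $\E[f(t,X_t)]$ and in $\E[L^0_t]$, and one must handle the measure-zero set $\{s:X_s=0\}$ carefully when invoking the HJB inequality \eqref{eq:HJB1} (which holds only for $x\neq y$) — this is exactly where the local time correction $-\frac{1}{a^2}L^0_t$ and the pasting condition \eqref{eq:HJB2}/Lemma \ref{mono} do the bookkeeping, just as in Lemma \ref{lem:Q geq V}.
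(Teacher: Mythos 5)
Your proposal is correct and follows essentially the same route as the paper: apply the time-inhomogeneous It\^o--Tanaka formula of Lemma \ref{Ito-Tanaka} to $f(t,z)=H_{T-t}(|z|)$, take expectations, use that $H$ satisfies the HJB inequality \eqref{eq:HJB1} (Lemma \ref{HJB:H}) off the level to drop the drift term, let $t\uparrow T$ with $H_0\equiv 0$ to obtain $\frac{1}{a^2}\E[L^0_T]\leq H_T(|x|)$, and then pass from local time to occupation density via the occupation times formula exactly as in Lemma \ref{lem:Q geq V}. Your extra care about the martingale property of the stochastic integral and the limit $t\uparrow T$ only makes explicit what the paper treats implicitly.
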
 
\begin{proof}	
 Without loss of generality we may assume that $y=0$. Let $X\in\mathcal C_{\mathcal A}$ be any controlled process, i.e.\ $dX_t = \beta_t dt + \sigma_t dW_t$ with $X_0=x$, $t\geq 0$ and $(\beta,\sigma)\in\mathcal A$. By Lemma \ref{Ito-Tanaka} we have
	\begin{align*}
		H_{T-t}(|X_t|)&=H_T(|X_0|)-\int_{0}^{t}\partial_{T} H_{T-s}(|X_s|)ds+\int_{0}^{t} \partial_r H_{T-s}(|X_s|)\sign(X_s)dX_s\\
		&\quad+\frac{1}{2}\int_{0}^{t}\partial_{rr}H_{T-s}(|X_s|)\sigma^2_sds-\frac{1}{a^2}L^0_t\\
	            &=H_T(|X_0|)+\int_{0}^{t}\Big(-\partial_{T} H_{T-s}(|X_s|)+ \sign(X_s) \partial_r H_{T-s}(|X_s|)\beta_s\\&\quad +\frac{1}{2}\partial_{rr} H_{T-s}(|X_s|)\sigma^2_s\Big)1_{\{X_s\neq 0\}}ds-\frac{1}{a^2}L^0_t +\int_{0}^{t}\partial_r H_{T-s}(|X_s|)\sigma_s dW_s.
	\end{align*}
Taking expectations on both sides, we find
\begin{align}\label{eq:E neq}
	\E[H_{T-t}(|X_t|)]&=H_T(|X_0|)\nonumber\\
	&\quad+\E\left[\int_{0}^{t}\Big(-\partial_{T} H_{T-s}(|X_s|)+ \sign(X_s)\partial_r H_{T-s}(|X_s|)\beta_s\right. \nonumber\\
 &\quad \left.+\frac{1}{2}\partial_{rr}H_{T-s}(|X_s|)\sigma^2_s\Big)1_{\{X_s\neq 0\}}ds\right]
-\E\left[\frac{1}{a^2}L^0_t\right]\\
&\leq  H_T(|X_0|)-\frac{1}{a^2} \E[L^0_t]\nonumber
\end{align}
where the inequality holds because $H$ is a solution to Equation \eqref{eq:HJB1} according to Lemma \ref{HJB:H}. Pushing $t$ to $T$ in Inequality \eqref{eq:E neq} yields
\begin{align*}
0=\E[H_0(|X_T|)]\leq H_T(|X_0|)-\frac{1}{a^2} \E[L^0_T].
    \end{align*}
Hence, we have 
\begin{align*}
\frac{1}{a^2} \E[L^0_T]\leq H_T(|X_0|).
\end{align*}
As in the proof of Lemma \ref{lem:Q geq V} we find that
\begin{align*}
    G(x,0,T) &=\sup_{X \in \mathcal C_{\mathcal A}, X_0=x}\limsup_{N\rightarrow \infty}\E \left[ \frac N2 \int_{0}^{T}1_{\left\{|X_s|\leq \frac{1}{N}\right\}}ds\right]\\
    &\leq H_T(|x|)
    \end{align*}
as required.
\end{proof}

Next, we complete the proof and show that $G=H$.
\begin{thm}[Main result]\label{t:main result 2}
	Let $G$ be given as in Problem \eqref{eq:OG} and $H$ as in Lemma \ref{H:explicit}. Then $G(x,y,T) = H_{T}(|x-y|)$ for any $T\geq 0$ and $x,y\in\mathbb R$.
\end{thm}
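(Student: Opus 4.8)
The plan is to mirror the structure used in the exponentially stopped case: Lemma~\ref{lem:leq} already gives $G(x,y,T)\le H_T(|x-y|)$, so it remains to prove the reverse inequality $H_T(|x-y|)\le G(x,y,T)$ by exhibiting, for each $M\in\mathbb N_+$, a controlled process $X^M\in\mathcal C_{\mathcal A}$ whose expected occupation density at $y$ up to time $T$ is at least $H_T(|x-y|)-C/M$ for a constant $C$ independent of $M$. Without loss of generality take $y=0$. I would use exactly the feedback control from the proof of Theorem~\ref{t:V=Q}: set $\sigma_M(x):=a+(b-a)g_M(|x|)$ with the same cutoff $g_M$, and let $X=X^M$ solve $dX_t=-k\sigma_M(X_t)^2\,\sign(X_t)\,dt+\sigma_M(X_t)\,dW_t$ with $X_0=x$, and put $\beta_t:=-k\sigma_M(X_t)^2\sign(X_t)$, $\sigma_t:=\sigma_M(X_t)$. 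Then $(\beta,\sigma)\in\mathcal A$ and $X\in\mathcal C_{\mathcal A}$.

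The key computation is to apply the time-inhomogeneous It\^o--Tanaka formula of Lemma~\ref{Ito-Tanaka} to $f(t,x):=H_{T-t}(|x|)$ along this particular $X$, take expectations, and let $t\uparrow T$ using $H_0\equiv 0$. This yields
\begin{align*}
0 = \E[H_0(|X_T|)] = H_T(|x|) + \E\Bigl[\int_0^T\!\!\Bigl(-\partial_T H_{T-s}(|X_s|)+\sign(X_s)\partial_r H_{T-s}(|X_s|)\beta_s+\tfrac12\partial_{rr}H_{T-s}(|X_s|)\sigma_s^2\Bigr)1_{\{X_s\neq 0\}}ds\Bigr]-\frac1{a^2}\E[L^0_T].
\end{align*}
On the set $\{|X_s|\ge 2/M\}$ the control $(\beta_s,\sigma_s)=(-kb^2\sign(X_s),b)$ is exactly the maximiser of the Hamiltonian, so by Lemma~\ref{HJB:H} (i.e.\ $H$ solving Equation~\eqref{eq:HJB1}) the integrand vanishes there. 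On the set $\{0<|X_s|<2/M\}$ the integrand is bounded in absolute value by a constant $C_1$ (depending only on $\sup_{|z|\le 2/M}$ of $|\partial_r H|,|\partial_{rr}H|$, which can itself be dominated uniformly using that $r\mapsto H_T(r)$ and its derivatives are controlled near $0$ — here $\partial_rH$ is bounded by $1/a^2$ by Lemma~\ref{mono}, and $\partial_{rr}H$ is integrable), times $\mathbb P(|X_s|<2/M)$. Exactly as in Theorem~\ref{t:V=Q}, applying the already-established inequality $\frac1{a^2}\E[L^z_T]\le H_T(|z|)\le H_T(0)$ (Lemma~\ref{lem:leq}, or rather its local-time form) and the occupation times formula bounds $\int_0^T\mathbb P(|X_s|<2/M)\,ds$ (against $\int_{-2/M}^{2/M}\E[L^z_T]\,dz/a^2$ up to the ellipticity constant) by a term of order $1/M$, so the whole error term $\epsilon_M$ satisfies $\epsilon_M\le C_2/M$ with $C_2$ independent of $M$.

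Rearranging gives $H_T(|x|)\le \frac1{a^2}\E[L^0_T]+\epsilon_M$, and then converting the local time back to occupation density via the occupation times formula and \cite[Corollary~VI.1.9]{revuz2013continuous} — noting $d\langle X,X\rangle_s=\sigma_s^2\,ds\ge a^2\,ds$ — we obtain
$$H_T(|x|)\le \frac1{a^2}\Bigl(\limsup_{N\to\infty}\frac N2\,\E\Bigl[\int_0^T 1_{\{|X_s|\le 1/N\}}\,d\langle X,X\rangle_s\Bigr]\Bigr)+\frac{C_2}{M}\le \Bigl(\limsup_{N\to\infty}\frac N2\,\E\Bigl[\int_0^T 1_{\{|X_s|\le 1/N\}}\,ds\Bigr]\cdot\frac{b^2}{a^2}\Bigr)+\frac{C_2}{M}.$$
Wait — this last line needs care: one wants the occupation density with $ds$, not $d\langle X,X\rangle_s$, and near $0$ one has $\sigma_s=a$ by construction of $\sigma_M$ (since $g_M=0$ on $[0,1/M]$), so in fact $d\langle X,X\rangle_s=a^2\,ds$ on $\{|X_s|\le 1/N\}$ once $N\ge M$, which makes the constant exactly $1$ in the relevant regime. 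Thus $H_T(|x|)\le G(x,0,T)+C_2/M$ for every $M$, whence $H_T(|x|)\le G(x,0,T)$; combined with Lemma~\ref{lem:leq} this gives $G(x,y,T)=H_T(|x-y|)$, and Theorem~\ref{thm:main} follows since $H_T(r)$ is precisely the claimed integral.

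The main obstacle I anticipate is the technical handling of the behaviour near $x=0$: verifying that Lemma~\ref{Ito-Tanaka} genuinely applies along this degenerate feedback diffusion (existence/uniqueness of the SDE, that $X$ is indeed in $\mathcal C_{\mathcal A}$, and that the local time $L^0_t$ is well-defined and finite), and making the cancellation of $\sigma_s=a$ versus the $a^2$ from $\partial_r f(t,0^{\pm})=\mp 1/a^2$ in the local-time term perfectly consistent — this is the same delicate point as in Theorem~\ref{t:V=Q}, namely that the optimiser at the single point $x=y$ uses $\sigma=a$ while away from it uses $\sigma=b$, and the construction $\sigma_M$ interpolates so that only an $O(1/M)$ error is incurred. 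Everything else is a routine repetition of the exponentially stopped argument with $Q_\lambda$ replaced by $f(t,x)=H_{T-t}(|x|)$.
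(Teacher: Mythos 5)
Your overall strategy is exactly the paper's: Lemma \ref{lem:leq} for the upper bound, then the same feedback control $\sigma_M$, Lemma \ref{Ito-Tanaka} applied to $f(t,x)=H_{T-t}(|x|)$, vanishing of the integrand on $\{|X_s|\ge 2/M\}$ by the HJB equation, and the conversion of $\frac1{a^2}\E[L^0_T]$ back into the occupation-density functional via \cite[Corollary VI.1.9]{revuz2013continuous} (your observation that $\sigma_M\equiv a$ near $0$ makes $d\langle X,X\rangle_s=a^2\,ds$ on $\{|X_s|\le 1/N\}$ for $N\ge M$ is the right justification of that step). However, there is a genuine gap in your error estimate on $\{0<|X_s|<2/M\}$. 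You claim the integrand $\psi(s,x)=-\partial_T H_{T-s}(|x|)+\sign(x)\partial_r H_{T-s}(|x|)\beta_s+\frac12\partial_{rr}H_{T-s}(|x|)\sigma_s^2$ is bounded there by a constant $C_1$ independent of $s$, so that $\epsilon_M\le C_1\int_0^T\mathbb P(|X_s|<2/M)\,ds\le C_2/M$. This is false: from the explicit formula, $\partial_T H_{T-s}(z)=\frac{b}{a^2\sqrt{T-s}}\phi(v(z,T-s))+\frac{b^2k}{a^2}\Phi(v(z,T-s))$, and for $z$ small compared with $\sqrt{T-s}$ one has $\phi(v(z,T-s))\approx\phi(0)$, so $\sup_{0<z\le 2/M}|\partial_T H_{T-s}(z)|$ blows up like $(T-s)^{-1/2}$ as $s\uparrow T$ (the same singularity appears in $\partial_{rr}H_{T-s}$ near $z=0$, via the HJB identity). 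Your parenthetical only controls $|\partial_r H|$ and $|\partial_{rr}H|$ and silently omits the time-derivative term, which is precisely the problematic one; no uniform-in-$s$ constant exists, so the asserted $O(1/M)$ bound does not follow by simply copying the exponentially stopped argument, where $Q_\lambda$ has no time dependence and the analogous supremum is finite.

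This is exactly why the paper devotes the separate technical Lemma \ref{epsilon bounds} to this step: there one first uses the HJB equation together with the sign properties $\partial_r H\le 0\le \partial_{rr}H$ (Lemma \ref{mono}) to reduce the integrand bound to $2\sup_{0<z\le 2/M}|\partial_T H_{T-s}(z)|$, then bounds $\phi\le\frac1{\sqrt{2\pi}}$, $\Phi\le 1$, and applies H\"older's inequality to the term $\int_0^T (T-s)^{-1/2}\,\mathbb P(|X_s|\le \frac2M)\,ds$, combined with the local-time bound $\int_0^T\mathbb P(|X_s|\le\frac2M)\,ds\le \frac{4H_T(0)}{M}$ (the part of your argument that is correct), yielding an error of order $M^{-1/3}$ rather than $M^{-1}$. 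Any fix of your proof must handle this integrable-but-unbounded time singularity explicitly — either by reproducing such a H\"older (or interpolation) argument, or by a dominated-convergence argument giving $\epsilon_M\to 0$ without a rate; the rate itself is immaterial for the conclusion, but the uniform constant you invoke does not exist.
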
 
\begin{proof}
Lemma \ref{lem:leq} yields that $G(x,y,T)\leq H_T(|x-y|)$ for any $T\geq 0$ and $x,y\in\mathbb R$. We assume without loss of generality that $y=0$ and need to show that
 $$ H_T(|x|) \leq G(x,0,T) $$
for any $x\in\mathbb R$, $T\geq 0$.

Fix $M\in\mathbb N_+$ and we construct a feedback control via choosing $$\sigma_M(x):= a+(b-a)g_M(|x|)$$ where $g_M\in C(\mathbb R_+,[0,1])$ such that for any $x\in\mathbb R$
\begin{equation*}
g_M(x)=
\begin{cases}
0 & \text{if} \quad |x|\in [0, \frac1 M],\\
1 & \text{if} \quad |x|\in [\frac2 M,\infty).\\
\end{cases}  
\end{equation*}
Let $X$ be the solution to the stochastic differential equation:
\begin{align}\label{X:conti}
dX_t = -k \sigma_M(X_t)^2 \mathrm{sign}(X_t) dt + \sigma_M(X_t)dW_t, \quad X_0=x, t\geq 0.
\end{align}
The control is now specified as:
 $$ \beta_t := -k \sigma_M(X_t)^2\mathrm{sign}(X_t), \quad \sigma_t := \sigma_M(X_t),\quad t\geq 0.$$
Note that $(\beta,\sigma)\in\mathcal A$ and therefore $X\in\mathcal C_{\mathcal A}$. As in the proof of Lemma \ref{lem:leq} we find that
\begin{align}\label{eq:H:exp}
\E[H_{T-t}(|X_t|)]&=H_T(|X_0|)\nonumber\\
	&\quad+\E\left[\int_{0}^{t}\Big(-\partial_{T} H_{T-s}(|X_s|)+ \sign(X_s)\partial_r H_{T-s}(|X_s|)\beta_s\right. \nonumber\\
 &\quad \left.+\frac{1}{2}\partial_{rr}H_{T-s}(|X_s|)\sigma^2_s\Big)1_{\{X_s\neq 0\}}ds\right]
-\E\left[\frac{1}{a^2}L^0_t\right]
\end{align}

With $\psi(t,x):=-\partial_{T} H_{T-t}(|x|)+ \sign(x)\partial_r H_{T-t}(|x|)\beta_t +\frac{1}{2}\partial_{rr}H_{T-t}(|x|)\sigma^2_t$, $x\in \mathbb R$, $0\leq t\leq T$, we have
\begin{align*}
\E\left[\int_0^T \psi(s,X_s)1_{\{X_s\neq0\}}ds \right] =\E\left[\int_0^T \psi(s,X_s)\left(1_{\left\{\frac{2}{M}\leq|X_s|\right\}}+1_{\left\{0<|X_s|< \frac{2}{M}\right\}}\right)ds \right]. 
\end{align*}

Since $H$ is the solution to Equation \eqref{eq:HJB1} and by using Lemma \ref{mono}, the supremum is attained with the drift and diffusion coefficient $(\beta, \sigma)$ given above when $|X_t|\geq \frac{2}{M}$. Thus, we have
\begin{align*}\E\left[\int_0^T \psi(s,X_s)1_{\left\{\frac{2}{M}\leq|X_s| \right\}}ds \right]=0.
\end{align*}
By Lemma \ref{epsilon bounds}, there exists some constant $C_3>0$ such that
\begin{align*}
\E\left[\int_0^T \left|\psi(s,X_s)\right| 1_{\left\{0<|X_s|< \frac{2}{M}\right\}}ds \right]
\leq \frac{C_3}{M^{\frac{1}{3}}} 
\end{align*}
where $C_3$ only depends on $a,b,k,T$ and it does not depend on the specific choice of $M$. Pushing $t$ to $T$ in Equation \eqref{eq:H:exp} yields
\begin{align*}
    H_T(|x|) &\leq \frac{1}{a^2}\E[L_T^0] + \frac{C_3}{M^{\frac13}}\\
    &= \frac1{a^2} 
\left(\limsup_{N \rightarrow \infty}   \E\left[\frac N2\int_0^T 1_{\left\{|X_s|\leq \frac 1 N\right\}}d\<X,X\>_s\right]\right)+\frac{C_3}{M^{\frac13}}\\
&\leq \sup_{Z \in \mathcal C_{\mathcal A}, Z_0=x}\left(\limsup_{N \rightarrow \infty}   \E\left[\frac N2\int_0^T 1_{\left\{|Z_s|\leq \frac 1 N\right\}}ds\right]\right) +  \frac{C_3}{M^{\frac13}}\\
&=G(x,0,T) +\frac{C_3}{M^{\frac13}}
\end{align*} 
where we used \cite[Corollary VI.1.9]{revuz2013continuous} for the first equality. This is true for all $M\in\mathbb N_+$ and, consequently, $H_T(|x|) \leq G(x,0,T)$ as required.
\end{proof}


\begin{rem}
    Assume that $dX_t = b(X_t)dt + \sigma(X_t)dW_t$ with $X_0\in\mathbb R$ where $b,\sigma\in C(\mathbb R,\mathbb R)$ with $\sigma(x)\in[a,b]$ and $|b(x)|\leq k |\sigma(x)|^2$ for any $x\in\mathbb R$ where $0<a\leq b$ and $k\geq 0$. Then \cite[Theorem VI.1.7, Corollary VI.1.6]{revuz2013continuous} yield that $X$ has a version of its local time $L$ which is continuous in time and space. The occupation times formula \cite[Corollary VI.1.6]{revuz2013continuous} yields that for any Borel set $A\subseteq \mathbb R$ one has
     $$ \int_0^t f_A(X_s)d\<X,X\>_s = \int_{-\infty}^\infty f_A(x) L_t^x dx$$
     where $f_A(x) := 1_A(x)/\sigma^2(x)$ for $x\in\mathbb R$. We see that
      $$ \int_0^t 1_A(X_s)ds = \int_{-\infty}^\infty 1_A(x) \frac{L_t^x}{\sigma^2(x)} dx = \int_A\frac{L_t^x}{\sigma^2(x)} dx . $$
     The fundamental theorem of calculus yields that
      $$ \lim_{N\rightarrow \infty} \frac{\int_{\left[y-\frac1N,y+\frac1N\right]}\frac{L_t^x}{\sigma^2(x)} dx}{2/N} = \frac{L_t^y}{\sigma^2(y)}$$
      for any $y\in\mathbb R$. Consequently, we have
       $$ \lim_{N\rightarrow \infty}\frac N2\int_0^t 1_{\left\{|X_s-y|\leq\frac1N\right\}} ds = \frac{L_t^y}{\sigma^2(y)}.$$
       In particular, $\frac{L_t^y}{\sigma^2(y)}$ is a version of the occupation density of $X$ at time $t$ in position $y$. We find that
      $$ \E\left[\lim_{N\rightarrow \infty}\frac N2\int_0^t 1_{\left\{|X_s-y|\leq\frac1N\right\}} ds\right] = \E\left[\frac{L_t^y}{\sigma^2(y)}\right]. $$
    Dominated convergence 
    yields
     $$ \lim_{N\rightarrow \infty}\E\left[\frac N2\int_0^t 1_{\left\{|X_s-y|\leq\frac1N\right\}} ds\right] = \E\left[\frac{L_t^y}{\sigma^2(y)}\right].$$
    On the other hand, applying  Theorem \ref{t:main result 2} yields
     $$  \E\left[\frac{L_t^y}{\sigma^2(y)}\right] = \lim_{N\rightarrow \infty}\E\left[\frac N2\int_0^t 1_{\left\{|X_s-y|\leq\frac1N\right\}} ds\right] \leq G(X_0,y,t).$$
\end{rem}

\appendix
\section{Computational bounds}
The following bound is technical and is needed in the proof of Theorem \ref{t:main result 2}.
\begin{lem}\label{epsilon bounds}
    Let $X$ be defined as the solution of the stochastic differential Equation \eqref{X:conti} and $H$ be defined as in Lemma \ref{H:explicit} Equation   \eqref{eq:H}. Let $\psi$ be defined as in the proof of Theorem \ref{t:main result 2}.
    
    Then we have for $M\in \mathbb N_+$ with $M>0$ and any fixed $T>0$
    \begin{align*}
        \E\left[\int_{0}^{T}\left|\psi(s,X_s)\right| 1_{\left\{0<|X_s|\leq \frac 2 M\right\}}ds\right] 
        \leq \frac{C_3}{M^{\frac13}} 
    \end{align*} 
    where $C_3 := 2\max\left(\frac{2^{\frac{5}{2}}b T^{\frac16}H^{\frac13}_T(0)}{a^2\sqrt{\pi}}, \frac{8b^2 k H_T(0)}{a^2}\right)$.
\end{lem}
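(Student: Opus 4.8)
The plan is to split the integrand $|\psi(s,X_s)|$, on the event $\{X_s\neq 0\}$, into a part governed by $\partial_r H$ and a part governed by $\partial_{rr}H$, to estimate the two resulting expectations separately, and then to add them. First I would record a pointwise bound. Writing $r=|X_s|$ and $\tau=T-s$, and using $(\beta_s,\sigma_s)\in\mathcal A$ (so $|\beta_s|\le k\sigma_s^2\le kb^2$ and $\sigma_s^2\le b^2$) together with Lemma \ref{mono} (which gives $\partial_r H_\tau\le 0\le\partial_{rr}H_\tau$), one has $|\psi(s,X_s)|\le \partial_T H_\tau(r)+kb^2|\partial_r H_\tau(r)|+\tfrac{b^2}{2}\partial_{rr}H_\tau(r)$. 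By Lemma \ref{HJB:H} combined with Lemma \ref{mono}, $H$ satisfies the HJB identity $\partial_T H_\tau(r)=kb^2|\partial_r H_\tau(r)|+\tfrac{b^2}{2}\partial_{rr}H_\tau(r)$ for $r>0$ (in particular $\partial_T H_\tau\ge 0$), whence
$$|\psi(s,X_s)|\le 2kb^2\,|\partial_r H_{T-s}(|X_s|)|+b^2\,\partial_{rr}H_{T-s}(|X_s|)\qquad\text{on }\{X_s\neq 0\}.$$
It therefore suffices to bound each of $\E\big[\int_0^T 2kb^2|\partial_r H_{T-s}(|X_s|)|\,1_{\{0<|X_s|\le 2/M\}}\,ds\big]$ and $\E\big[\int_0^T b^2\partial_{rr}H_{T-s}(|X_s|)\,1_{\{0<|X_s|\le 2/M\}}\,ds\big]$ by $\tfrac12 C_3 M^{-1/3}$.

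For the $\partial_r$-term I would use $|\partial_r H_\tau(r)|\le 1/a^2$ (Lemma \ref{mono}) together with an occupation-density estimate: since $\sigma_s\ge a$, the occupation times formula \cite[Corollary VI.1.6]{revuz2013continuous} gives $\int_0^T 1_{\{|X_s|\le 2/M\}}\,ds\le a^{-2}\int_{-2/M}^{2/M}L_T^z\,dz$, and arguing exactly as in the proof of Lemma \ref{lem:leq} (centred at each level $z$, using translation invariance of the PDE) yields $a^{-2}\E[L_T^z]\le H_T(|x-z|)\le H_T(0)$, where the last inequality uses that $r\mapsto H_T(r)$ is nonnegative and decreasing (Lemma \ref{mono}). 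Hence $\E\big[\int_0^T 1_{\{|X_s|\le 2/M\}}\,ds\big]\le 4H_T(0)/M$, and the $\partial_r$-term is $\le \tfrac{2kb^2}{a^2}\cdot\tfrac{4H_T(0)}{M}\le \tfrac{8kb^2 H_T(0)}{a^2 M^{1/3}}$, which is at most $\tfrac12 C_3 M^{-1/3}$.

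The $\partial_{rr}$-term is the heart of the matter. The key estimate is
$$\partial_{rr}H_\tau(r)\le \frac{\sqrt 2}{a^2 b\sqrt{\pi\tau}}\qquad(r>0,\ \tau>0),$$
which I would obtain from the explicit integrand in Lemma \ref{mono}: after the substitution $u=r/(b\sqrt t)$ and one integration by parts in the resulting integral one reaches the identity $\partial_{rr}H_\tau(r)=\dfrac{2\phi(v(r,\tau))}{a^2 b\sqrt\tau}+\dfrac{2k}{a^2}\displaystyle\int_{-\infty}^{v(r,\tau)}\dfrac{w\phi(w)}{\sqrt{w^2+4kr}}\,dw$, and the last integral is $\le 0$ because $w\mapsto w\phi(w)/\sqrt{w^2+4kr}$ is odd, so its integral over $(-\infty,v(r,\tau)]$ equals $-\int_{v(r,\tau)}^{\infty}(\ge 0)$; combined with $\phi\le\phi(0)=1/\sqrt{2\pi}$ this gives the stated bound. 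Granting it, I apply Hölder's inequality in the time variable with exponents $(3/2,3)$:
$$\int_0^T (T-s)^{-1/2}\,1_{\{|X_s|\le 2/M\}}\,ds\le \Big(\int_0^T (T-s)^{-3/4}\,ds\Big)^{2/3}\Big(\int_0^T 1_{\{|X_s|\le 2/M\}}\,ds\Big)^{1/3}=(4T^{1/4})^{2/3}\Big(\int_0^T 1_{\{|X_s|\le 2/M\}}\,ds\Big)^{1/3}.$$
Taking expectations, using concavity of $t\mapsto t^{1/3}$ and the bound $\E\big[\int_0^T 1_{\{|X_s|\le 2/M\}}\,ds\big]\le 4H_T(0)/M$ from the previous step, one gets $\E\big[\int_0^T (T-s)^{-1/2}1_{\{|X_s|\le 2/M\}}\,ds\big]\le 4T^{1/6}(H_T(0)/M)^{1/3}$; multiplying through by $b^2\cdot\frac{\sqrt 2}{a^2 b\sqrt\pi}$ shows the $\partial_{rr}$-term is $\le \frac{2^{5/2}b\,T^{1/6}H_T(0)^{1/3}}{a^2\sqrt\pi\,M^{1/3}}\le \tfrac12 C_3 M^{-1/3}$. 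Adding the two estimates yields the claim, since $C_3=2\max(\cdot,\cdot)$.

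I expect the main obstacle to be exactly the key estimate $\partial_{rr}H_\tau(r)\le \sqrt 2/(a^2 b\sqrt{\pi\tau})$: the constant $\sqrt 2/\sqrt\pi=2\phi(0)$ is precisely what is needed to recover $C_3$, so no slack is permitted, and the bound must be uniform in both $r$ and $k$ despite the sign-changing integrand appearing in $\partial_{rr}H_\tau$; the integration-by-parts identity above (or an equivalent careful estimate of that integrand) is what makes this work, while the subsequent Hölder/Jensen step is what converts the $\sqrt\tau$-singularity and the $O(1/M)$ occupation mass into the final $M^{-1/3}$ rate with the correct $T^{1/6}H_T(0)^{1/3}$ dependence.
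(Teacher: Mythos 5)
Your proposal is correct, and its skeleton (pointwise bound on $|\psi|$ via the HJB structure and Lemma \ref{mono}, the occupation estimate $\E\bigl[\int_0^T 1_{\{|X_s|\le 2/M\}}ds\bigr]\le 4H_T(0)/M$, then H\"older in time to tame the $(T-s)^{-1/2}$ singularity) matches the paper's; even the constants come out identically. The one genuine difference is the pointwise step. The paper uses the HJB identity in the opposite direction: it collapses $|{-\partial_TH}|+kb^2|\partial_rH|+\tfrac{b^2}{2}|\partial_{rr}H|$ to $2|\partial_TH_{T-s}(z)|$ and then simply reads off $\partial_TH_\tau(r)=\frac{b}{a^2\sqrt{\tau}}\phi(v(r,\tau))+\frac{kb^2}{a^2}\Phi(v(r,\tau))\le\frac{b}{a^2\sqrt{2\pi\tau}}+\frac{kb^2}{a^2}$ from the explicit integrand of $H$, so no estimate of $\partial_{rr}H$ is ever needed. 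You instead expand $\partial_TH$ into $kb^2|\partial_rH|+\tfrac{b^2}{2}\partial_{rr}H$ and bound the two pieces separately, which forces you to prove the nontrivial estimate $\partial_{rr}H_\tau(r)\le\frac{\sqrt2}{a^2b\sqrt{\pi\tau}}$. That estimate (and your integration-by-parts identity) is in fact correct: one can check it by noting $-a^2\partial_rH_\tau(r)=\Phi(v(r,\tau))+e^{2kr}\Phi(-w(r,\tau))$ with $w=\sqrt{v^2+4kr}$ (the inverse-Gaussian hitting-time distribution function), so the HJB identity gives $\partial_{rr}H_\tau(r)=\frac{2\phi(v)}{a^2b\sqrt\tau}-\frac{2k}{a^2}e^{2kr}\Phi(-w)\le\frac{2\phi(0)}{a^2b\sqrt\tau}$, which agrees with your formula since $\int_{-\infty}^{v}\frac{u\phi(u)}{\sqrt{u^2+4kr}}\,du=-e^{2kr}\Phi(-w)$ (both sides have the same $v$-derivative and vanish as $v\to-\infty$); your odd-function sign argument is also valid. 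Likewise your bound $|\partial_rH_\tau(r)|\le 1/a^2$ follows from Lemma \ref{mono} (convexity plus the limit $-1/a^2$ at $r=0$), and your level-$z$ version of Lemma \ref{lem:leq} correctly supplies the occupation bound that the paper uses implicitly. So your route buys an explicit closed-form control of $\partial_{rr}H$ (of independent interest), at the cost of extra work that the paper's single identity $|\psi|\le 2\partial_TH$ renders unnecessary; a minor cosmetic difference is that you apply H\"older pathwise and then Jensen, while the paper applies H\"older to $s\mapsto\mathbb P(|X_s|\le 2/M)$ after taking expectations — both give $4T^{1/6}(4H_T(0)/M)^{1/3}$ up to the same factor.
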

\begin{proof}
We have
\begin{align}
\E&\left[\int_{0}^{T}\left|\psi(s,X_s)\right| 1_{\left\{0<|X_s|\leq \frac 2 M\right\}}ds\right] \nonumber\\
&=\E\bigg[\int_{0}^{T}\left|-\partial_{T} H_{T-s}(|X_s|)+ \sign(X_s)\partial_r H_{T-s}(|X_s|)\beta_s+\frac{1}{2}\partial_{rr}H_{T-s}(|X_s|)\sigma^2_s\right|\nonumber\\
&\qquad  \cdot 1_{\left\{0<|X_s|\leq \frac 2 M\right\}}ds\bigg]\nonumber\\
&\leq \int_{0}^{T}\sup_{|z|\in \left(0,\frac{2}{M}\right]}\left\{|-\partial_{T} H_{T-s}(z)|+|\partial_r H_{T-s}(z)|kb^2+\frac{1}{2}|\partial_{rr} H_{T-s}(z)|b^2  \right\}\nonumber\\
&\qquad \cdot \mathbb P\left(|X_s|\leq \frac 2 M\right)ds\label{A1}\\
&= \int_0^T \sup_{|z|\in \left(0,\frac{2}{M} \right]}\left|2\partial_{T} H_{T-s}(z) \right| \mathbb P\left(|X_s|\leq \frac 2 M\right)ds\label{A2}
\end{align}
where the last equality holds because $\partial_r H_t(z) \leq 0$, $\partial_{rr} H_t(z)\geq 0$ for any $t\geq 0$ and $|z|>0 $
 \begin{align}
    &\quad |-\partial_{T} H_{T-s}(z)|+|\partial_r H_{T-s}(z)|kb^2+\frac{1}{2}|\partial_{rr} H_{T-s}(z)|b^2 \nonumber\\
   &= 2|-\partial_{T} H_{T-s}(z)|\nonumber\\
   &\quad+\left\{-\partial_{T} H_{T-s}(z)+ \partial_r H_{T-s}(z)(-\mathrm{sign}(z)kb^2)+\frac{1}{2}\partial_{rr}H_{T-s}(z)b^2\right\}\label{AA1}\\
   &= 2|\partial_{T} H_{T-s}(z)|\nonumber
 \end{align}
 where the second equation holds since the sum in the bracket in Equation \eqref{AA1} is $0$ for any $z$ away from $0$.

Note that 
 $$ \partial_{T} H_{T-s}(z) = \frac{b}{a^2\sqrt{T-s}}\phi(v(z,T-s))+\frac{b^2 k}{a^2}\Phi(v(z,T-s)), \quad 0\leq z, 0\leq s\leq T$$
 with $v, \phi$ and $\Phi$ as in Lemma \ref{H:explicit}. 
 
Using the equation for $\partial_TH$ and the inequality from above we find
\begin{align}
\E&\left[\int_{0}^{T}\left|\psi(s,X_s)\right| 1_{\left\{0<|X_s|\leq \frac 2 M\right\}}ds\right] \nonumber \\
&\leq  \frac{\sqrt{2}b}{a^2\sqrt{\pi}}\int_0^T \left|\frac{1}{\sqrt{(T-s)}}\right| \mathbb P\left(|X_s|\leq \frac 2 M\right)ds +2\int_0^T \frac{b^2k}{a^2}\mathbb P\left(|X_s|\leq \frac 2 M\right)ds\nonumber
\\
&\leq  \frac{\sqrt{2}b}{a^2\sqrt{\pi}} \left\{\int_0^T s^{\left(-\frac{1}{2} \times \frac{3}{2}\right)} ds\right\}^{\frac{2}{3}} \left\{\int_0^T \left(\mathbb P\left(|X_s|\leq \frac 2 M\right)\right)^3 ds\right\}^{\frac{1}{3}}\nonumber\\
& \qquad+\frac{2b^2k}{a^2}\int_0^T \mathbb P\left(|X_s|\leq \frac 2 M\right)ds \nonumber 
\\
&\leq  \frac{2^{\frac{11}{6}}b T^{\frac16}}{a^2\sqrt{\pi}} \left\{\int_0^T \mathbb P\left(|X_s|\leq \frac 2 M\right)ds\right\}^{\frac{1}{3}}+\frac{2b^2k}{a^2}\int_0^T \mathbb P\left(|X_s|\leq \frac 2 M\right)ds \nonumber
\\
&\leq  \frac{2^{\frac{11}{6}}b T^{\frac16}}{a^2\sqrt{\pi}}\left(\frac{4H_T(0)}{M}\right)^{1/3}+\frac{8b^2 k H_T(0)}{a^2M} \nonumber\\
&= \frac{2^{\frac{5}{2}}b T^{\frac16}H^{\frac13}_T(0)}{a^2\sqrt{\pi}M^{\frac13}}+\frac{8b^2 k H_T(0)}{a^2M}\nonumber
\end{align}
where we bounded $\phi$ by $\frac{1}{\sqrt{2\pi}}$ and $\Phi$ with $1$ in the first inequality and we used H\"older's inequality for the second inequality. 
\end{proof}

\section{Application to integral bounds}
In order to demonstrate the usefulness of our main result Theorem \ref{thm:main} we apply it to obtain an integration bound for some expected time-integrals.
\begin{cor}\label{c:disintegration2}
    Assume the requirements of Corollary \ref{c:disintegration}. Further, let $T\geq 0$ and $f:[0,T]\times\mathbb R\rightarrow [0,\infty]$ be measurable and decreasing in the first variable, i.e.\ $f(t,x)\leq f(s,x)$ for any $s\leq t$ and $x\in\mathbb R$. Then
     \begin{align*}
         \E&\left[ \int_0^T f(s,X_s) ds \right] \leq \\
          &\int_{-\infty}^\infty \int_0^T \left(\frac{b}{a^2\sqrt{t}}\phi(v(|x-y|,t))+\frac{b^2 k}{a^2} \Phi(v(|x-y|,t))\right)f(t,y) dt dy
     \end{align*} 
where $v(r,t):= kb\sqrt{t}-\frac{r}{b\sqrt{t}}, 0\leq t, 0\leq r$, $\phi(z):=\frac{1}{\sqrt{2\pi}}e^{-\frac{z^2}{2}}$ and $\Phi(z):=\frac{1}{\sqrt{2\pi}}\int_{-\infty}^{z}e^{-\frac{s^2}{2}}ds$ for any $x,y\in \mathbb R$.
\end{cor}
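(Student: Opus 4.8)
The plan is to read the left-hand side as the integral of $f$ against an expected space-time occupation measure, and then to exploit the time-monotonicity of $f$ to reduce the claim to Corollary~\ref{c:disintegration}, which is precisely the special case of ``rectangular'' integrands.

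First I would set $g(t,y):=\frac{b}{a^2\sqrt t}\phi(v(|x-y|,t))+\frac{b^2k}{a^2}\Phi(v(|x-y|,t))$, so that the fundamental theorem of calculus together with Theorem~\ref{thm:main} gives $\int_0^t g(s,y)\,ds=G(x,y,t)$ for every $t\ge0$. On $[0,T]\times\mathbb R$ define the positive measures $\Theta(A):=\E[\int_0^T 1_A(s,X_s)\,ds]$ and $\Theta'(dt\,dy):=g(t,y)\,dt\,dy$; a routine monotone-class argument (using that $(s,\omega)\mapsto X_s(\omega)$ is progressively measurable) yields $\E[\int_0^T h(s,X_s)\,ds]=\int h\,d\Theta$ for all measurable $h:[0,T]\times\mathbb R\to[0,\infty]$, and Tonelli identifies the right-hand side of the corollary with $\int f\,d\Theta'$. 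It therefore suffices to prove $\int f\,d\Theta\le\int f\,d\Theta'$ for every measurable $f\ge0$ that is decreasing in the first variable; by monotone convergence (replacing $f$ by $(f\wedge n)\,1_{\{|y|\le n\}}$, which is still decreasing in $t$ and increases to $f$) we may and do assume in addition that $f$ is bounded with bounded support in $y$, so that $\Theta$ and $\Theta'$ are finite on all sets that will occur.

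The core inequality is that $\Theta$ and $\Theta'$ are comparable on time-rectangles: for every $t\in[0,T]$ and Borel $C\subseteq\mathbb R$,
$$\Theta([0,t]\times C)=\E\Big[\int_0^t 1_C(X_s)\,ds\Big]\le\int_C G(x,y,t)\,dy=\int_C\int_0^t g(s,y)\,ds\,dy=\Theta'([0,t]\times C),$$
the middle step being Corollary~\ref{c:disintegration} with horizon $t$ and integrand $1_C$. I would then upgrade this to all sets of the form $D=\{(t,y):t\le\theta(y)\}$ with $\theta:\mathbb R\to[0,T]$ Borel: approximating $\theta$ from above by simple functions $\theta^{(n)}\downarrow\theta$ taking finitely many values on a dyadic grid, the set $D^{(n)}:=\{t\le\theta^{(n)}(y)\}$ decomposes as a finite disjoint union of time-rectangles $[0,jT/2^n]\times\{\theta^{(n)}=jT/2^n\}$, hence $\Theta(D^{(n)})\le\Theta'(D^{(n)})$; and since $D^{(n)}\downarrow D$ with $D^{(1)}$ of finite measure for both, continuity from above gives $\Theta(D)\le\Theta'(D)$. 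Finally, because $f$ is decreasing in its first variable, for each level $u$ the section $\{t:f(t,y)>u\}$ is a sub-interval of $[0,T]$ containing $0$, so $\{f>u\}$ is contained in, and differs by a $\Theta'$-null set (as $\Theta'$ has no atoms in $t$) from, a set $D_u$ of the above type; thus $\Theta(\{f>u\})\le\Theta(D_u)\le\Theta'(D_u)=\Theta'(\{f>u\})$, and the layer-cake formula gives $\int f\,d\Theta=\int_0^\infty\Theta(\{f>u\})\,du\le\int_0^\infty\Theta'(\{f>u\})\,du=\int f\,d\Theta'$, which is the assertion.

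The only step that needs real care is the passage from rectangles to the sets $D$: one must keep track of finiteness (hence the initial reduction to $f$ bounded with bounded support) and of the harmless null sets along the grid boundaries, using that $\Theta'$ charges no set of the form $\{t=\theta(y)\}$ so that $\Theta$ need never be evaluated there. Everything else --- the monotone-class identification of $\Theta$, the Tonelli rearrangements, and the layer-cake decomposition --- is standard.
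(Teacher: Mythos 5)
Your proof is correct, but it takes a genuinely different route from the paper's. The paper first treats continuously differentiable $f$: it writes $f(s,x)=f(T,x)+\int_s^T|\partial_t f(u,x)|\,du$, applies Corollary \ref{c:disintegration} separately to $f(T,\cdot)$ (horizon $T$) and to $|\partial_t f(u,\cdot)|$ (horizon $u$), and reassembles the right-hand side by an integration by parts against $\partial_T G$; the general case is then dispatched with a brief monotone class argument. You never differentiate: you read both sides as integrals of $f$ against the expected space-time occupation measure $\Theta$ and the reference measure $\Theta'$ with density $\partial_T G$, obtain $\Theta\le\Theta'$ on time-rectangles $[0,t]\times C$ directly from Corollary \ref{c:disintegration} at horizon $t$, upgrade to hypograph sets $\{t\le\theta(y)\}$ via dyadic approximation from above and continuity from above, and conclude with the layer-cake formula, using that monotonicity in $t$ makes every superlevel set of $f$ such a hypograph up to a $\Theta'$-null graph. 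Your route buys a cleaner treatment of merely measurable decreasing integrands: the paper's closing monotone class step is delicate exactly because the approximation must preserve monotonicity in $t$, whereas your truncation $(f\wedge n)1_{\{|y|\le n\}}$ and the level-set decomposition manifestly do; the price is some bookkeeping (Borel measurability of $\theta_u(y)=\sup\{t:f(t,y)>u\}$, finiteness of $\Theta'$ on the sets where continuity from above is invoked), which you either supply or correctly flag as routine. The paper's route, in exchange, is shorter for smooth $f$ and makes the role of $\partial_T G$ in the bound transparent.
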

\begin{proof}
    First assume that $f$ is continuously differentiable. Then one has    
    \begin{align*}
        f(s,x) &= f(T,x) + \int_s^{T} |\partial_t f(u,x)| du 
    \end{align*}
    Hence, we find that
     \begin{align*}
         \E\left[ \int_0^T f(s,X_s) ds \right] &= \E\left[ \int_0^T f(T,X_s) ds\right] + \E\left[ \int_0^T \int_s^T |\partial_t f(u,X_s)|du ds\right] \\
          &= \E\left[ \int_0^T f(T,X_s) ds\right] + \int_0^T \E\left[  \int_0^u |\partial_t f(u,X_s)|ds \right]du \\
          &\leq \int_{\mathbb R} f(T,y)G(x,y,T)dy + \int_0^T \int_{\mathbb R} |\partial_t f(u,y)| G(x,y,u) dy du \\
          &= \int_0^T \int_{\mathbb R} f(u,y) \partial_TG(x,y,u) dy du \\
          &= \int_{\mathbb R} \int_0^T f(u,y) \partial_T G(x,y,u) du dy
     \end{align*}
     as claimed if $f$ is continuously differentiable. Here, we used the equation for $f$ in the first equality, Tonelli's theorem for the second equality, Corollary \ref{c:disintegration} for the inequality, integration by parts applied to the outer right integral for the third equality and Tonelli's theorem for the final equality.

    By a monotone class argument we see that the inequality holds for all measurable functions $f$ which are decreasing in the first variable.
\end{proof}

\bibliographystyle{alpha}
\bibliography{main}

\end{document}